\providecommand{\U}[1]{\protect\rule{.1in}{.1in}}
\newtheorem{theorem}{Theorem}[section]
\newtheorem{corollary}[theorem]{Corollary}
\newtheorem{lemma}[theorem]{Lemma}
\newtheorem{proposition}[theorem]{Proposition}
\newtheorem{remark}[theorem]{Remark}
\numberwithin{equation}{section}
\newenvironment{proof}[1][Proof]{\noindent\textbf{#1.} }{\ \rule{0.5em}{0.5em}}
\begin{document}

\title{The Cheeger constant as limit of Sobolev-type constants}
\author{Grey Ercole\\{\small Universidade Federal de Minas Gerais} \\{\small Belo Horizonte, MG, 30.123-970, Brazil}\\{\small grey@mat.ufmg.br}}
\maketitle

\begin{abstract}
\noindent Let $\Omega$ be a bounded, smooth domain of $\mathbb{R}^{N},$
$N\geq2.$ For $1<p<N$ and $0<q(p)<p^{\ast}:=\frac{Np}{N-p}$ let
\[
\lambda_{p,q(p)}:=\inf\left\{  \int_{\Omega}\left\vert \nabla u\right\vert
^{p}\mathrm{d}x:u\in W_{0}^{1,p}(\Omega)\text{ \ and \ }\int_{\Omega
}\left\vert u\right\vert ^{q(p)}\mathrm{d}x=1\right\}  .
\]
We prove that if $\lim_{p\rightarrow1^{+}}q(p)=1,$ then $\lim_{p\rightarrow
1^{+}}\lambda_{p,q(p)}=h(\Omega)$, where $h(\Omega)$ denotes the Cheeger
constant of $\Omega.$ Moreover, we study the behavior of the positive
solutions $w_{p,q(p)}$ to the Lane-Emden equation $-\operatorname{div}%
(\left\vert \nabla w\right\vert ^{p-2}\nabla w)=\left\vert w\right\vert
^{q-2}w,$ as $p\rightarrow1^{+}.$

\end{abstract}

{\small \noindent\textbf{2020 MSC:} 35B40, 35J92, 49Q20.}

{\small \noindent\textbf{Keywords:} Cheeger constant, p-Laplacian, Picone's
inequality, singular problem, Sobolev constants.}

\section{Introduction}

Let $\Omega$ be a smooth, bounded domain of $\mathbb{R}^{N},$ $N\geq2.$ For
$1<p<N$ and $0<q\leq p^{\ast}:=\frac{Np}{N-p}$ let
\begin{equation}
\lambda_{p,q}:=\inf\left\{  \left\Vert \nabla u\right\Vert _{p}^{p}:u\in
W_{0}^{1,p}(\Omega)\text{ \ and \ }\left\Vert u\right\Vert _{q}=1\right\}  ,
\label{lpq}%
\end{equation}
where
\[
\left\Vert u\right\Vert _{r}:=\left(  \int_{\Omega}\left\vert u\right\vert
^{r}\mathrm{d}x\right)  ^{\frac{1}{r}},\text{ \ \ }r>0.
\]
We recall that $\left\Vert \cdot\right\Vert _{r}$ is the standard norm of the
Lebesgue space $L^{r}(\Omega)$ if $r\geq1,$ but it is not a norm if $0<r<1$.

Note from (\ref{lpq}) that
\begin{equation}
\lambda_{p,q}\leq\frac{\left\Vert \nabla u\right\Vert _{p}^{p}}{\left\Vert
u\right\Vert _{q}^{p}}\text{ for all }u\in W_{0}^{1,p}(\Omega)\setminus
\left\{  0\right\}  \label{Q}%
\end{equation}
since the above quotients are homogeneous.

When $0<q<p^{\ast},$ the existence of a minimizer $u_{p,q}$ for the
constrained minimization problem (\ref{lpq}) follows from standard arguments
of the Calculus of Variations. Moreover, $u_{p,q}$ is a weak solution to the
Dirichlet problem%
\begin{equation}
\left\{
\begin{array}
[c]{lll}%
-\operatorname{div}\left(  \left\vert \nabla u\right\vert ^{p-2}\nabla
u\right)  =\lambda_{p,q}\left\vert u\right\vert ^{q-2}u & \text{\textup{in}} &
\Omega\\
u=0 & \text{\textup{on}} & \partial\Omega.
\end{array}
\right.  \label{Dir}%
\end{equation}
In consequence, $u_{p,q}>0$ in $\Omega$ and $u_{p,q}\in C^{1,\alpha}%
(\overline{\Omega})$ for some $0<\alpha<1$ (we refer to \cite[Theorem
1(i)]{GST} for the regularity of $u_{p,q}$ when $0<q<1$, in which case
(\ref{Dir}) is singular). These facts are well known when $1\leq q<p^{\ast},$
since $\lambda_{p,q}$ is the best constant in the Sobolev (compact) embedding
$W_{0}^{1,p}(\Omega)\hookrightarrow L^{q}(\Omega).$

It is worth mentioning that $u_{p,q}$ is the only positive minimizer to
(\ref{lpq}) in the sublinear case: $0<q<p.$ However, this uniqueness property
might fail in the superlinear, subcritical case: $p<q<p^{\ast}.$ For examples
and a discussion about this issue we recommend the recent paper \cite{BLind}
by Brasco and Lindgren, where an important result is established for general
smooth bounded domains: the uniqueness of the minimizer $u_{p,q}$ whenever
$2<p<q$ and $q$ is sufficiently close to $p.$ We stress that such a uniqueness
result for $1<p<2$ is not yet available in the literature.

When $q=p^{\ast},$ the infimum $\lambda_{p,p^{\ast}}$ cannot be attained in
$W_{0}^{1,p}(\Omega)$ if $\Omega\not =\mathbb{R}^{N}.$ Actually,
$\lambda_{p,p^{\ast}}$ does not depend on $\Omega$ as it coincides with the
well-known Sobolev constant $S_{N,p},$ that is:
\begin{equation}
\lambda_{p,p^{\ast}}=S_{N,p}:=N\omega_{N}^{\frac{p}{N}}\left(  \frac{N-p}%
{p-1}\right)  ^{p-1}\left(  \frac{\Gamma(N/p)\Gamma(1+N-N/p)}{\Gamma
(N)}\right)  ^{\frac{p}{N}}, \label{lpp*}%
\end{equation}
where $\Gamma(t)=\int_{0}^{\infty}s^{t-1}e^{-s}\mathrm{d}s$ is the Gamma
function and $\omega_{N}:=\pi^{N/2}/\Gamma(1+N/2)$ is the $N$-dimensional
Lebesgue volume of the unit ball of $\mathbb{R}^{N}.$

According to \cite[Theorem 9]{AFI} by Anello, Faraci and Iannizzotto, for each
fixed $p\in(1,N)$ the function
\[
(0,p^{\ast}]\ni q\mapsto\lambda_{p,q}\left\vert \Omega\right\vert ^{\frac
{p}{q}}%
\]
is decreasing and absolutely continuous on compact sets of $(0,p^{\ast}].$ The
same result, but for $q\in\lbrack1,p^{\ast}],$ had already been obtained by
Ercole in \cite{Er13}.

As for $q$ varying with $p$, Kawohl and Fridman proved in \cite{KF03} that
\begin{equation}
\lim_{p\rightarrow1^{+}}\lambda_{p,p}=h(\Omega) \label{pp}%
\end{equation}
where $h(\Omega)$ is the Cheeger constant of $\Omega.$

We recall that
\[
h(\Omega):=\inf\left\{  \frac{P(E)}{\left\vert E\right\vert }:E\subset
\overline{\Omega}\text{ and }\left\vert E\right\vert >0\right\}  ,
\]
where $P(E)$ stands for the perimeter of $E$ in $\mathbb{R}^{N}$ and
$\left\vert E\right\vert $ stands for the $N$-dimensional Lebesgue volume of
$E.$

The Cheeger problem consists of finding a subset $E\subset\overline{\Omega}$
such that
\[
h(\Omega)=\frac{P(E)}{\left\vert E\right\vert }.
\]
Such a subset $E$ is called Cheeger set of $\Omega.$

We notice from (\ref{lpp*}) that
\begin{equation}
\lim\limits_{p\rightarrow1^{+}}\lambda_{p,p^{\ast}}=N\omega_{N}^{\frac{1}{N}%
}=\left\vert \Omega\right\vert ^{\frac{1}{N}}h(\Omega^{\star}) \label{pp*}%
\end{equation}
where $\Omega^{\star}$ denotes the ball of $\mathbb{R}^{N}$ centered at the
origin and such that $\left\vert \Omega^{\star}\right\vert =\left\vert
\Omega\right\vert .$ The second equality in (\ref{pp*}) is due the fact that
balls are Cheeger sets of themselves (i.e.\thinspace they are calibrable).
Hence, as $R=(\left\vert \Omega\right\vert /\omega_{N})^{\frac{1}{N}}$ is the
radius of $\Omega^{\star},$ one has $h(\Omega^{\star})=N/R=N(\omega
_{N}/\left\vert \Omega\right\vert )^{\frac{1}{N}}.$ It is well known that
$h(\Omega^{\star})\leq h(\Omega),$ the equality occurring if and only if
$\Omega$ is a ball.

Owing to (\ref{pp}), when $p\rightarrow1^{+}$ the minimizer $u_{p,p}$
converges in $L^{1}(\Omega)$ (after passing to a subsequence) to a function
$u_{1}$ whose the $t$-superlevel sets $E_{t}:=\left\{  x\in\Omega
:u_{1}(x)>t\right\}  $ are Cheeger sets, for almost every $t>0.$ As shown in
\cite{KF03}, these properties are obtained from a variational version of the
Cheeger problem in the $BV$ setting, which we briefly present in Section
\ref{sec2}.

In this paper we suppose that $q$ varies with $p$ along a more general path,
$q=q(p)$ for $p\in(1,p^{\ast}),$ and study the behavior of $\lambda_{p,q(p)}$
when $p\rightarrow1^{+}$ and $q(p)\rightarrow1.$ Adapting an estimate from
\cite{Er13} (see Lemma \ref{estim} below) and making use of (\ref{pp}) we
extend the results of \cite{KF03}. Our main result, which will be proved in
Section \ref{sec3}, is stated as follows.

\begin{theorem}
\label{main}If $0<q(p)<p^{\ast}$ and $\lim\limits_{p\rightarrow1^{+}}q(p)=1,$
then
\begin{equation}
\lim_{p\rightarrow1^{+}}\lambda_{p,q(p)}=h(\Omega), \label{l=h}%
\end{equation}%
\begin{equation}
\lim_{p\rightarrow1^{+}}\left\Vert u_{p,q(p)}\right\Vert _{1}=1 \label{l1=1}%
\end{equation}
and%
\begin{equation}
\lim_{p\rightarrow1^{+}}\left\Vert u_{p,q(p)}\right\Vert _{\infty}^{q(p)-p}=1.
\label{linf2}%
\end{equation}

Moreover, any sequence $\left(  u_{p_{n},q(p_{n})}\right)  ,$ with
$p_{n}\rightarrow1^{+},$ admits a subsequence that converges in $L^{1}%
(\Omega)$ to a nonnegative function $u\in L^{1}(\Omega)\cap L^{\infty}%
(\Omega)$ such that:

\begin{enumerate}
\item[(a)] $\left\Vert u\right\Vert _{1}=1,$

\item[(b)] $\dfrac{1}{\left\vert \Omega\right\vert }\leq\left\Vert
u\right\Vert _{\infty}\leq\dfrac{h(\Omega)^{N}}{\left\vert \Omega\right\vert
h(\Omega^{\star})^{N}},$ and

\item[(c)] for almost every $t\geq0,$ the $t$-superlevel set
\[
E_{t}:=\left\{  x\in\Omega:u(x)>t\right\}
\]
is a Cheeger set.
\end{enumerate}
\end{theorem}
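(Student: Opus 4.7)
The overall plan is to sandwich $\lambda_{p,q(p)}$ between quantities converging to $h(\Omega)$, with the Kawohl--Fridman identity (\ref{pp}) as the pivot. The upper bound $\limsup \lambda_{p,q(p)} \leq h(\Omega)$ will come from smooth test functions; the lower bound $\liminf \lambda_{p,q(p)} \geq h(\Omega)$ will follow from two complementary ingredients: the AFI monotonicity of $q\mapsto \lambda_{p,q}|\Omega|^{p/q}$ when $q(p)\leq p$, and the expected comparison Lemma~\ref{estim} together with a uniform $L^{\infty}$-bound on $u_{p,q(p)}$ when $q(p)\geq p$.

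For the upper bound I pick any nonnegative $\phi \in C_c^{\infty}(\Omega)$ and insert it in (\ref{lpq}):
$$\lambda_{p,q(p)} \leq \frac{\|\nabla\phi\|_p^{\,p}}{\|\phi\|_{q(p)}^{\,p}}.$$
Dominated convergence delivers $\|\nabla\phi\|_p^p \to \|\nabla\phi\|_1$ and $\|\phi\|_{q(p)}^p \to \|\phi\|_1$ as $p\to 1^{+}$, so the right-hand side tends to $\|\nabla\phi\|_1/\|\phi\|_1$. Approximating in $BV$ the characteristic function of a Cheeger set by such smooth $\phi$ (a density argument as in \cite{KF03}) then yields $\limsup_{p\to 1^{+}} \lambda_{p,q(p)} \leq h(\Omega)$, regardless of the sign of $q(p)-p$.

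For the lower bound, when $q(p) \leq p$ the AFI monotonicity gives $\lambda_{p,q(p)} \geq \lambda_{p,p}|\Omega|^{1-p/q(p)}$, and since $p/q(p)\to 1$ the right-hand side tends to $h(\Omega)$ by (\ref{pp}). When $q(p) \geq p$, testing $\lambda_{p,p}$ with $u_{p,q(p)}/\|u_{p,q(p)}\|_p$ produces $\lambda_{p,p}\|u_{p,q(p)}\|_p^{\,p} \leq \lambda_{p,q(p)}$, while the pointwise bound $|u|^{q(p)} \leq \|u\|_\infty^{q(p)-p}|u|^p$ integrated against $\|u_{p,q(p)}\|_{q(p)}=1$ yields $\|u_{p,q(p)}\|_p^{\,p} \geq \|u_{p,q(p)}\|_\infty^{\,p-q(p)}$. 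Combining the two I obtain
$$\lambda_{p,p} \;\leq\; \lambda_{p,q(p)}\,\|u_{p,q(p)}\|_\infty^{\,q(p)-p},$$
which is the form I expect Lemma~\ref{estim} to take. The trivial lower bound $\|u_{p,q(p)}\|_\infty \geq |\Omega|^{-1/q(p)}$ (from $\|u_{p,q(p)}\|_{q(p)}=1$) shows $\|u_{p,q(p)}\|_\infty$ is bounded below, so what is missing is a uniform upper bound on $\|u_{p,q(p)}\|_\infty$ as $p\to 1^{+}$; I would extract it through Moser iteration on (\ref{Dir}) using the just-established upper bound on $\lambda_{p,q(p)}$. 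Once $\|u_{p,q(p)}\|_\infty$ lies in a compact subset of $(0,\infty)$, the exponent $q(p)-p\to 0$ forces $\|u_{p,q(p)}\|_\infty^{q(p)-p}\to 1$, which simultaneously produces (\ref{linf2}) and $\liminf \lambda_{p,q(p)} \geq h(\Omega)$, hence (\ref{l=h}). Securing the uniform $L^{\infty}$-estimate uniformly in $p\to 1^{+}$ is the main obstacle.

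The remaining claims follow standard lines. Assertion (\ref{l1=1}) comes from Hölder interpolation of $\|u_{p,q(p)}\|_1$ between $\|u_{p,q(p)}\|_{q(p)}=1$ and $\|u_{p,q(p)}\|_\infty$, together with (\ref{linf2}). For the subsequential convergence, the Hölder bound $\|\nabla u_{p_n,q(p_n)}\|_1 \leq \lambda_{p_n,q(p_n)}^{1/p_n}|\Omega|^{1-1/p_n}$ combined with the uniform $L^\infty$-estimate produces a $BV$-compact family; the $L^1$-limit $u$ lies in $BV(\Omega)\cap L^\infty(\Omega)$ with $\|u\|_1=1$, proving (a). Lower semicontinuity of the $BV$ seminorm gives $\int_\Omega|Du|\leq h(\Omega)$, so the coarea formula yields
$$\int_0^\infty P(E_t)\,\mathrm{d}t \;=\; \int_\Omega |Du| \;\leq\; h(\Omega)\|u\|_1 \;=\; \int_0^\infty h(\Omega)|E_t|\,\mathrm{d}t,$$
which, together with the pointwise $P(E_t)\geq h(\Omega)|E_t|$, forces $P(E_t)=h(\Omega)|E_t|$ for a.e.\ $t>0$, proving (c). From (c) and the isoperimetric inequality $P(E_t)\geq N\omega_N^{1/N}|E_t|^{(N-1)/N}$ one obtains $|E_t|\geq N^N\omega_N/h(\Omega)^N$, and the layer-cake identity $1=\int_0^{\|u\|_\infty}|E_t|\,\mathrm{d}t$ then gives the upper bound in (b); the lower bound $\|u\|_\infty \geq 1/|\Omega|$ is immediate from $\|u\|_1=1$.
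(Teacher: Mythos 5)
Your architecture is sound and several of your ingredients are correct alternatives to the paper's: the test-function upper bound $\limsup_{p\to1^{+}}\lambda_{p,q(p)}\le h(\Omega)$ (the paper instead tests with the normalized eigenfunction $e_{p}$ and invokes (\ref{pp})), the use of the Anello--Faraci--Iannizzotto monotonicity to settle the lower bound when $q(p)\le p$, and the derivation of the upper bound in (b) from the isoperimetric inequality applied to the Cheeger superlevel sets (which correctly reproduces $h(\Omega)^{N}/(|\Omega|\,h(\Omega^{\star})^{N})$, since $|\Omega|\,h(\Omega^{\star})^{N}=N^{N}\omega_{N}$). The treatment of (a) and (c) via $BV$-compactness, lower semicontinuity and the coarea formula matches the paper, modulo the care needed to work with $|Du|(\mathbb{R}^{N})$ (including the boundary trace term) rather than $|Du|(\Omega)$.

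The genuine gap is the one you flag yourself: the uniform-in-$p$ upper bound on $\left\Vert u_{p,q(p)}\right\Vert _{\infty}$ as $p\to1^{+}$. Without it you cannot obtain (\ref{linf2}), nor close the lower bound $\liminf\lambda_{p,q(p)}\ge h(\Omega)$ in the superlinear regime $q(p)>p$ via your inequality $\lambda_{p,p}\le\lambda_{p,q(p)}\left\Vert u_{p,q(p)}\right\Vert _{\infty}^{q(p)-p}$, nor (in your route) the lower bound in (\ref{l1=1}) when $q(p)<1$. Deferring this to ``Moser iteration'' is not a proof: the whole difficulty is that the iteration must produce a constant that stays bounded as $p\to1^{+}$ while the exponent $q(p)-p$ changes sign and the equation degenerates, and this uniformity is precisely what has to be checked. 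The paper resolves it with Lemma \ref{estim}, a level-set (Talenti-type) estimate with fully explicit constants, which applied with $\sigma=q(p)$ gives $\left\Vert u_{p,q(p)}\right\Vert _{\infty}\le C_{p}^{-p^{\ast}/(N(p^{\ast}-q(p)))}$ where $C_{p}^{-1}$ is controlled by $(\lambda_{p,q(p)}/\lambda_{p,p^{\ast}})^{N/p}$ and hence, by the already-established upper bound on $\lambda_{p,q(p)}$ and (\ref{pp*}), converges to $h(\Omega)^{N}/(|\Omega|\,h(\Omega^{\star})^{N})$. Supplying either this lemma or a genuinely uniform iteration argument is what your proposal is missing; everything else you wrote can then be assembled into a complete proof.
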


Besides allowing $q(p)\rightarrow1^{-}$, which embraces (\ref{Dir}) in its
singular form, our approach holds for every family of extremals $u_{p,q(p)}$
in the superlinear, subcritical case: $p<q(p)<p^{\ast}.$

It is simple to verify that the function
\begin{equation}
v_{p,q}:=\lambda_{p,q}^{\frac{1}{q-p}}u_{p,q}, \label{vpq}%
\end{equation}
is a positive weak solution to the Lane-Emden type problem%
\begin{equation}
\left\{
\begin{array}
[c]{lll}%
-\operatorname{div}\left(  \left\vert \nabla w\right\vert ^{p-2}\nabla
w\right)  =\left\vert w\right\vert ^{q-2}w & \text{\textup{in}} & \Omega\\
w=0 & \text{\textup{on}} & \partial\Omega.
\end{array}
\right.  \label{Dir1}%
\end{equation}

The next corollary is stated to solutions to (\ref{Dir1}) in the form
(\ref{vpq}). It is an immediate consequence of (\ref{l=h}) and (\ref{linf2})
since
\[
\left\Vert v_{p,q}\right\Vert _{q}^{q-p}=\lambda_{p,q}\left\Vert
u_{p,q}\right\Vert _{q}^{q-p}=\lambda_{p,q}\text{ \ and \ }\left\Vert
v_{p,q}\right\Vert _{\infty}^{q-p}=\lambda_{p,q}\left\Vert u_{p,q}\right\Vert
_{\infty}^{q-p}.
\]

\begin{corollary}
\label{LE1}If $0<q(p)<p^{\ast}$ and $\lim\limits_{p\rightarrow1^{+}}q(p)=1,$
then%
\[
\lim_{p\rightarrow1^{+}}\left\Vert v_{p,q(p)}\right\Vert _{q(p)}%
^{q(p)-p}=h(\Omega)=\lim_{p\rightarrow1^{+}}\left\Vert v_{p,q(p)}\right\Vert
_{\infty}^{q(p)-p}.
\]

\end{corollary}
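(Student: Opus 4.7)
The plan is to simply unwind the definition of $v_{p,q}$ in (\ref{vpq}) and invoke the two limits (\ref{l=h}) and (\ref{linf2}) from Theorem~\ref{main}, as the author already indicates. Since $v_{p,q}=\lambda_{p,q}^{1/(q-p)}u_{p,q}$ and $\lambda_{p,q}^{1/(q-p)}$ is a positive scalar, one has
\[
\|v_{p,q}\|_{r}=\lambda_{p,q}^{1/(q-p)}\|u_{p,q}\|_{r}\qquad(r\in(0,\infty]),
\]
so that in particular $\|v_{p,q}\|_{r}^{q-p}=\lambda_{p,q}\|u_{p,q}\|_{r}^{q-p}$ for every such $r$.

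First, I would take $r=q$ and use the normalization $\|u_{p,q}\|_{q}=1$ from the constraint in (\ref{lpq}) to conclude that $\|v_{p,q}\|_{q}^{q-p}=\lambda_{p,q}$. Specialising to $q=q(p)$ and letting $p\to 1^{+}$, the first limit is exactly (\ref{l=h}), giving $\|v_{p,q(p)}\|_{q(p)}^{q(p)-p}\to h(\Omega)$.

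Next, taking $r=\infty$, I would write
\[
\|v_{p,q(p)}\|_{\infty}^{q(p)-p}=\lambda_{p,q(p)}\cdot\|u_{p,q(p)}\|_{\infty}^{q(p)-p},
\]
and pass to the limit using (\ref{l=h}) for the first factor and (\ref{linf2}) for the second. The product therefore converges to $h(\Omega)\cdot 1=h(\Omega)$, matching the value of the first limit.

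There is no real obstacle here: the corollary is genuinely a one-line reformulation of (\ref{l=h}) and (\ref{linf2}) through the homogeneous rescaling (\ref{vpq}), and the only tacit assumption worth flagging is that $q(p)\neq p$ along the path (otherwise the rescaling defining $v_{p,q(p)}$ is vacuous, and (\ref{Dir1}) reduces to an eigenvalue problem). Under the hypothesis $q(p)\to 1$ as $p\to 1^{+}$ this can only happen for a set of $p$ that does not accumulate at $1$ unless $q(p)\equiv p$ near $1$, which is excluded by the Lane-Emden framing of (\ref{Dir1}).
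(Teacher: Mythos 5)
Your proof is correct and is essentially identical to the paper's own argument: the author also just computes $\left\Vert v_{p,q}\right\Vert _{q}^{q-p}=\lambda_{p,q}\left\Vert u_{p,q}\right\Vert _{q}^{q-p}=\lambda_{p,q}$ and $\left\Vert v_{p,q}\right\Vert _{\infty}^{q-p}=\lambda_{p,q}\left\Vert u_{p,q}\right\Vert _{\infty}^{q-p}$ and then invokes (\ref{l=h}) and (\ref{linf2}). Your remark about needing $q(p)\neq p$ for the rescaling (\ref{vpq}) to make sense is a sensible observation the paper leaves implicit, but it does not change the argument.
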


In the particular case where $q(p)\equiv1$ this result had already been
obtained in \cite{BE11} by Bueno and Ercole, without using (\ref{pp}).

As it is well known, (\ref{Dir1}) has a unique positive weak solution when
$0<q<p,$ which is, of course, that given by (\ref{vpq}). However, there are
examples of smooth domains for which (\ref{Dir1}) has multiple positive weak
solutions when $p<q<p^{\ast},$ which may be of the form (\ref{vpq}) or not
(see \cite{BLind} and references therein). By the way, it is plain to check
that%
\begin{equation}
\lambda_{p,q}=\left\Vert v_{p,q}\right\Vert _{q}^{q-p}=\min\left\{  \left\Vert
w\right\Vert _{q}^{q-p}:w\text{ is a weak solution to }(\ref{Dir1})\right\}  .
\label{mLE}%
\end{equation}

Corollary \ref{LE1} deals with the behavior of positive weak solutions to
(\ref{Dir1}) that attains the minimum in (\ref{mLE}). Aiming to cover a wider
class of positive weak solutions $w_{p,q}$ to (\ref{Dir1}), including those
satisfying $\left\Vert w_{p,q}\right\Vert _{q}^{q-p}>\lambda_{p,q}$, we
provide the following stronger result, which will be proved in Section
\ref{sec4} by using Picone's inequality (see \cite{AH,BF}).

\begin{theorem}
\label{main2}Let $w_{p,q(p)}\in W_{0}^{1,p}(\Omega)$ be a positive weak
solution to (\ref{Dir1}), with $p<q(p)<p^{\ast}.$ Then, either
\begin{equation}
\limsup_{p\rightarrow1^{+}}\left\Vert w_{p,q(p)}\right\Vert _{\infty}%
^{q(p)-p}=+\infty\label{alt}%
\end{equation}
or%
\[
\lim_{p\rightarrow1^{+}}\left\Vert w_{p,q(p)}\right\Vert _{q(p)}%
^{q(p)-p}=h(\Omega)=\lim_{p\rightarrow1^{+}}\left\Vert w_{p,q(p)}\right\Vert
_{\infty}^{q(p)-p}.
\]

\end{theorem}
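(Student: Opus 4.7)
The strategy separates into an unconditional lower bound and an upper bound that requires the negation of (\ref{alt}). For the lower bound, test the weak formulation of (\ref{Dir1}) with $w_{p,q(p)}$ itself to obtain $\|\nabla w_{p,q(p)}\|_p^p = \|w_{p,q(p)}\|_{q(p)}^{q(p)}$; the Rayleigh characterization (\ref{Q}) applied to $w_{p,q(p)}/\|w_{p,q(p)}\|_{q(p)}$ then gives $\|w_{p,q(p)}\|_{q(p)}^{q(p)-p} \geq \lambda_{p,q(p)}$, and H\"older's inequality $\|w_{p,q(p)}\|_{q(p)} \leq \|w_{p,q(p)}\|_\infty |\Omega|^{1/q(p)}$ transfers this to $\|w_{p,q(p)}\|_\infty^{q(p)-p} \geq \lambda_{p,q(p)}|\Omega|^{-(q(p)-p)/q(p)}$. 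Since Theorem \ref{main} gives $\lambda_{p,q(p)}\to h(\Omega)$ and $|\Omega|^{(q(p)-p)/q(p)}\to 1$, both $\liminf \|w_{p,q(p)}\|_{q(p)}^{q(p)-p}$ and $\liminf \|w_{p,q(p)}\|_\infty^{q(p)-p}$ are at least $h(\Omega)$, regardless of whether (\ref{alt}) holds.

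Assuming now that (\ref{alt}) fails, invoke Picone's inequality (\cite{AH,BF}) with test function $u_{p,q(p)}$---the extremal of $\lambda_{p,q(p)}$ normalized by $\|u_{p,q(p)}\|_{q(p)} = 1$, so that $\|\nabla u_{p,q(p)}\|_p^p = \lambda_{p,q(p)}$---and the positive $C^{1,\alpha}(\overline{\Omega})$-solution $v := w_{p,q(p)}$, for which $-\operatorname{div}(|\nabla v|^{p-2}\nabla v)/v^{p-1} = v^{q(p)-p}$. Picone then yields
\[
\lambda_{p,q(p)} \;\geq\; \int_\Omega w_{p,q(p)}^{q(p)-p}\, u_{p,q(p)}^{p}\, \mathrm{d}x \;=\; \|w_{p,q(p)}\|_\infty^{q(p)-p}\int_\Omega \tilde w^{q(p)-p}\, u_{p,q(p)}^{p}\, \mathrm{d}x,
\]
where $\tilde w := w_{p,q(p)}/\|w_{p,q(p)}\|_\infty \in (0,1]$. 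Thus, if the rightmost integral tends to $1$ as $p\to 1^+$, then combining with $\lambda_{p,q(p)}\to h(\Omega)$ produces $\limsup \|w_{p,q(p)}\|_\infty^{q(p)-p}\leq h(\Omega)$, and the matching conclusion for $\|w_{p,q(p)}\|_{q(p)}^{q(p)-p}$ follows from the same H\"older inequality as in the first paragraph.

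The convergence $\int_\Omega \tilde w^{q(p)-p}\, u_{p,q(p)}^{p}\, \mathrm{d}x \to 1$ is the crux of the argument and constitutes the main obstacle. The upper limit $\leq 1$ is immediate from $\tilde w^{q(p)-p}\leq 1$, the uniform $L^\infty$-bound on $u_{p,q(p)}$ provided by Theorem \ref{main}(b), and the $L^1$-convergence $u_{p,q(p)}^p\to u$ with $\|u\|_1 = 1$ coming from (\ref{l1=1}) and Theorem \ref{main}. The matching lower limit asks that $\tilde w^{q(p)-p}$ be uniformly close to $1$ on a set carrying essentially all the $L^1$-mass of $u_{p,q(p)}^{p}$: for fixed $\varepsilon \in (0,1)$ one has $\tilde w^{q(p)-p}\geq \varepsilon^{q(p)-p}\to 1$ on $\{\tilde w \geq \varepsilon\}$, so the task reduces to controlling $\int_{\{\tilde w < \varepsilon\}} u_{p,q(p)}^p\, \mathrm{d}x$ uniformly in $p$ as $\varepsilon \to 0^+$. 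The delicate point is that $\tilde w$ depends on $p$ through both $w_{p,q(p)}$ and the normalization $\|w_{p,q(p)}\|_\infty$, and vanishes on $\partial\Omega$; the expected resolution combines a boundary-decay estimate $w_{p,q(p)} \geq c\,\operatorname{dist}(\cdot,\partial\Omega)$ with $c$ uniform in $p$ under the standing assumption that $\|w_{p,q(p)}\|_\infty^{q(p)-p}$ is bounded (established by a second application of Picone's inequality, comparing $w_{p,q(p)}$ with a suitable barrier in a tubular neighborhood of $\partial\Omega$) with the uniform $L^\infty$-bound on $u_{p,q(p)}$ from Theorem \ref{main}(b), which together bring the dangerous level-set integral under control as $p\to 1^+$.
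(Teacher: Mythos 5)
Your skeleton matches the paper's: the unconditional lower bound $h(\Omega)\leq\liminf\Vert w_{p,q(p)}\Vert_{q(p)}^{q(p)-p}\leq\liminf\Vert w_{p,q(p)}\Vert_{\infty}^{q(p)-p}$ via (\ref{Q}), H\"older and (\ref{l=h}), and, when (\ref{alt}) fails, an upper bound obtained from Picone's inequality tested against an extremal-type family. (The paper uses the $L^{\infty}$-normalized first eigenfunction $e_{p}$ rather than $u_{p,q(p)}$; this is a cosmetic difference, since either way one is reduced to showing that $\int_{\Omega}\widetilde{w}^{\,q(p)-p}\varphi_{p}\,\mathrm{d}x$ and $\int_{\Omega}\varphi_{p}\,\mathrm{d}x$ have the same positive limit, with $\widetilde{w}:=w_{p,q(p)}/\Vert w_{p,q(p)}\Vert_{\infty}$.) The genuine gap is that this reduction is exactly where your argument stops being a proof, and the mechanism you sketch to close it is not viable. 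A lower barrier $w_{p,q(p)}\geq c\,\operatorname{dist}(\cdot,\partial\Omega)$ with $c$ uniform in $p$ (after normalization) cannot hold in general: as $p\rightarrow1^{+}$ normalized solutions concentrate on Cheeger sets, which for non-calibrable $\Omega$ are proper subsets of $\Omega$, so $\widetilde{w}$ may tend to $0$ on an open region well inside $\Omega$. The dangerous set $\{\widetilde{w}<\varepsilon\}$ is therefore not a boundary phenomenon, and no tubular-neighborhood barrier controls $\int_{\{\widetilde{w}<\varepsilon\}}u_{p,q(p)}^{p}\,\mathrm{d}x$.

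The ingredient you are missing is Lemma \ref{estim} applied to $w_{p,q(p)}$ itself with $\lambda=\sigma=1$: it gives $C_{p}\Vert w_{p,q(p)}\Vert_{\infty}^{N(p-q(p))/p}\leq\Vert W_{p}\Vert_{1}$ for $W_{p}:=w_{p,q(p)}/\Vert w_{p,q(p)}\Vert_{\infty}$, where $C_{p}\rightarrow\vert\Omega\vert h(\Omega^{\star})^{N}$ by (\ref{pp*}) and (\ref{blamb5}). Under the standing assumption $L:=\limsup\Vert w_{p,q(p)}\Vert_{\infty}^{q(p)-p}<\infty$ this yields the quantitative nondegeneracy $\liminf_{p\rightarrow1^{+}}\Vert W_{p}\Vert_{1}\geq\vert\Omega\vert h(\Omega^{\star})^{N}L^{-N}>0$. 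Combined with a uniform $BV$ bound on $(W_{p})$ (coming from $\Vert\nabla w_{p,q(p)}\Vert_{1}\leq\vert\Omega\vert^{1-1/p}\Vert w_{p,q(p)}\Vert_{q(p)}^{q(p)/p}$ and the boundedness of $\Vert w_{p,q(p)}\Vert_{\infty}^{q(p)-p}$), one extracts a subsequence with $W_{p_{n}}\rightarrow W$ in $L^{1}$ and a.e., $\Vert W\Vert_{1}>0$; then $W_{p_{n}}^{q(p_{n})-p_{n}}\rightarrow1$ a.e.\ where $W>0$, and dominated convergence against $e_{p_{n}}^{p_{n}}$ (whose integral tends to $\Vert e\Vert_{1}>0$ by Lemma \ref{lem1}) gives $L\Vert e\Vert_{1}\leq h(\Omega)\Vert e\Vert_{1}$. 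Without such a nondegeneracy statement, your integral $\int_{\Omega}\widetilde{w}^{\,q(p)-p}u_{p,q(p)}^{p}\,\mathrm{d}x$ could a priori have $\liminf$ strictly below $1$, and the conclusion $L\leq h(\Omega)$ would not follow; as written, the decisive step of your proof is asserted rather than proved, and the proposed barrier route would have to be replaced by an $L^{1}$-lower-bound argument of this kind.
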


The alternative (\ref{alt}) can be replaced with (see Remark \ref{alter})
\[
\limsup_{p\rightarrow1^{+}}\left\Vert w_{p,q(p)}\right\Vert _{q(p)}%
^{q(p)-p}=+\infty.
\]
We believe that determining whether this alternative (or its equivalent
version (\ref{alt})) is actually possible is a very interesting open question
that we plan to study in the near future.

\section{The Cheeger problem in the $BV$ setting\label{sec2}}

In this section, we assume that $\Omega$ is a Lipschitz bounded domain and
collect some definitions, properties and basic results related to the
variational version of the Cheeger problem in the $BV$ setting. For details we
refer to \cite{CC} and \cite{Pa11}.

The total variation of $u\in L^{1}(\Omega)$ is defined as
\[
\left\vert Du\right\vert (\Omega):=\sup\left\{  \int_{\Omega}%
u\operatorname{div}\varphi\mathrm{d}x:\varphi\in C_{c}^{1}(\Omega
;\mathbb{R}^{N})\text{ \ and \ }\left\Vert \varphi\right\Vert _{L^{\infty}%
}\leq1\right\}  .
\]

The space $BV(\Omega)$ of the functions $u\in L^{1}(\Omega)$ of bounded
variation in $\Omega$ (i.e.$\,\left\vert Du\right\vert (\Omega)<\infty$),
endowed with the norm
\[
\left\Vert u\right\Vert _{BV}:=\left\Vert u\right\Vert _{1}+\left\vert
Du\right\vert (\Omega),
\]
is a Banach space compactly embedded into $L^{1}(\Omega)$. Moreover, the
functional $BV(\Omega)\ni u\mapsto\left\vert Du\right\vert (\Omega)$ is lower
semicontinuous in $L^{1}(\Omega).$

The Cheeger constant is also characterized as (see \cite[Proposition
3.1]{Pa11})
\begin{equation}
h(\Omega)=\inf_{BV_{0}(\Omega)}\frac{\left\vert Du\right\vert (\mathbb{R}%
^{N})}{\left\Vert u\right\Vert _{1}} \label{hbv}%
\end{equation}
where
\[
BV_{0}(\Omega):=\left\{  u\in BV(\mathbb{R}^{N}):\left\Vert u\right\Vert
_{1}>0\text{ \ and \ }u\equiv0\text{ in }\mathbb{R}^{N}\setminus
\overline{\Omega}\right\}
\]
and%
\[
\left\vert Du\right\vert (\mathbb{R}^{N})=\left\vert Du\right\vert
(\Omega)+\int_{\partial\Omega}\left\vert u\right\vert \mathrm{d}%
\mathcal{H}^{N-1}%
\]
($\mathcal{H}^{N-1}$ stands for the $(N-1)$-Hausdorff measure in
$\mathbb{R}^{N}$).

\begin{proposition}
[{\cite[Corollary 1(2)]{CC}}]\label{bv} Let $\left(  u_{n}\right)  \subset
BV_{0}(\Omega)$ be such that $u_{n}\rightarrow u$ in $L^{1}(\mathbb{R}^{N}).$
Then
\[
\left\vert Du\right\vert (\mathbb{R}^{N})\leq\liminf_{n\rightarrow\infty
}\left\vert Du_{n}\right\vert (\mathbb{R}^{N}).
\]

\end{proposition}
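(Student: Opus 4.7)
The plan is to apply the dual (distributional) definition of total variation and pass to the limit under the test integral. For a function $v \in L^1(\mathbb{R}^N)$, the total variation on the whole space may be written as
\[
|Dv|(\mathbb{R}^N) = \sup\left\{\int_{\mathbb{R}^N} v \operatorname{div}\varphi\, \mathrm{d}x : \varphi \in C_c^1(\mathbb{R}^N;\mathbb{R}^N),\ \|\varphi\|_{L^\infty} \leq 1\right\},
\]
and this is consistent with the decomposition $|Dv|(\mathbb{R}^N) = |Dv|(\Omega) + \int_{\partial\Omega} |v|\,\mathrm{d}\mathcal{H}^{N-1}$ recorded above for functions extended by zero outside $\overline{\Omega}$. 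Note first that because $u_n \equiv 0$ in $\mathbb{R}^N \setminus \overline{\Omega}$ and $u_n \to u$ in $L^1(\mathbb{R}^N)$, the limit $u$ also vanishes a.e.\ outside $\overline{\Omega}$, so the right-hand formula makes sense for $u$ once we know it belongs to $BV$.

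Next I would fix an arbitrary admissible test field $\varphi \in C_c^1(\mathbb{R}^N;\mathbb{R}^N)$ with $\|\varphi\|_{L^\infty} \leq 1$. Since $\operatorname{div}\varphi$ is bounded and has compact support, $L^1(\mathbb{R}^N)$-convergence of $u_n$ to $u$ gives
\[
\int_{\mathbb{R}^N} u \operatorname{div}\varphi\, \mathrm{d}x = \lim_{n\to\infty} \int_{\mathbb{R}^N} u_n \operatorname{div}\varphi\, \mathrm{d}x.
\]
By the sup-definition applied to each $u_n$, the $n$-th integral is bounded above by $|Du_n|(\mathbb{R}^N)$. Taking liminf on both sides yields
\[
\int_{\mathbb{R}^N} u \operatorname{div}\varphi\, \mathrm{d}x \leq \liminf_{n\to\infty} |Du_n|(\mathbb{R}^N).
\]
Finally, taking the supremum over all admissible $\varphi$ on the left recovers $|Du|(\mathbb{R}^N)$ and gives the desired inequality; in particular, if $\liminf |Du_n|(\mathbb{R}^N)$ is finite, this proves $u \in BV(\mathbb{R}^N)$.

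The steps are essentially routine, so there is no serious obstacle; the only point requiring care is the distinction between the definition of $|Dv|(\Omega)$ using $\varphi \in C_c^1(\Omega;\mathbb{R}^N)$ given in the excerpt and the corresponding definition on $\mathbb{R}^N$ using $\varphi \in C_c^1(\mathbb{R}^N;\mathbb{R}^N)$. One must invoke the identity relating the two (boundary-trace contribution) to justify that the $\mathbb{R}^N$-definition is the relevant object for semicontinuity in $L^1(\mathbb{R}^N)$; semicontinuity of $|Dv|(\Omega)$ alone would not be enough, because the boundary term can jump upward in the limit. Working globally on $\mathbb{R}^N$ from the outset sidesteps this issue entirely.
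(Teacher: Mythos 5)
Your argument is correct, and it is worth noting that the paper itself offers no proof of this proposition: it is imported wholesale by citation to Carlier--Comte \cite[Corollary 1(2)]{CC}, where it appears as a special case of a lower semicontinuity result for weighted total variation. What you have written is the classical self-contained argument: represent $\left\vert Dv\right\vert (\mathbb{R}^{N})$ as a supremum of the linear functionals $v\mapsto\int_{\mathbb{R}^{N}}v\operatorname{div}\varphi\,\mathrm{d}x$ over test fields $\varphi\in C_{c}^{1}(\mathbb{R}^{N};\mathbb{R}^{N})$ with $\left\Vert \varphi\right\Vert _{L^{\infty}}\leq1$, observe that each such functional is continuous under $L^{1}(\mathbb{R}^{N})$ convergence, and use that a supremum of continuous functionals is lower semicontinuous. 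Each step is sound, including the reduction $\int_{\mathbb{R}^{N}}u_{n}\operatorname{div}\varphi\,\mathrm{d}x\leq\left\vert Du_{n}\right\vert (\mathbb{R}^{N})$ and the final passage to the supremum in $\varphi$. Your closing remark is also the right one to make and is the entire point of working in $BV_{0}(\Omega)$ rather than $BV(\Omega)$: the functional $u\mapsto\left\vert Du\right\vert (\Omega)+\int_{\partial\Omega}\left\vert u\right\vert \mathrm{d}\mathcal{H}^{N-1}$ is \emph{not} a sum of two quantities that are separately lower semicontinuous (the boundary trace can jump upward, as one sees by approximating $\chi_{\Omega}$ by functions vanishing near $\partial\Omega$), and only by recognizing it as the global total variation of the zero-extension does the standard semicontinuity argument apply. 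The one identity you use without proof --- that for a Lipschitz domain the global total variation of the zero-extension equals the interior variation plus the boundary trace term --- is exactly the identity the paper records in Section 2, so nothing is missing relative to the framework already set up.
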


\begin{proposition}
\label{bv1}Suppose that
\[
h(\Omega)=\frac{\left\vert Du\right\vert (\mathbb{R}^{N})}{\left\Vert
u\right\Vert _{1}}%
\]
for some $u\in BV_{0}(\Omega).$ Then
\[
E_{t}:=\left\{  x\in\Omega:u(x)>t\right\}
\]
is a Cheeger set for almost every $t\geq0.$

Inversely, if $E\subset\overline{\Omega}$ is a Cheeger set of $\Omega,$ then
\[
h(\Omega)=\frac{\left\vert D\chi_{E}\right\vert (\mathbb{R}^{N})}{\left\Vert
\chi_{E}\right\Vert _{1}}%
\]
where $\chi_{E}$ stands for the characteristic function of $E$ in
$\mathbb{R}^{N}.$
\end{proposition}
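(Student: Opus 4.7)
The plan is to connect the $BV$ characterization (\ref{hbv}) of the Cheeger constant with its geometric definition by means of the coarea formula for $BV$ functions together with the layer-cake representation of the $L^{1}$-norm. For any nonnegative $u\in BV_{0}(\Omega)$ one has
\[
\left\Vert u\right\Vert _{1}=\int_{0}^{\infty}\left\vert E_{t}\right\vert \mathrm{d}t\quad\text{and}\quad\left\vert Du\right\vert (\mathbb{R}^{N})=\int_{0}^{\infty}P(E_{t})\mathrm{d}t,
\]
where the perimeter is taken in $\mathbb{R}^{N}$ so that the boundary-trace contribution $\int_{\partial\Omega}\left\vert u\right\vert \mathrm{d}\mathcal{H}^{N-1}$ displayed in the excerpt is automatically included in $\left\vert Du\right\vert (\mathbb{R}^{N})$.

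For the direct implication I would first reduce to the case $u\geq0$: replacing $u$ by $\left\vert u\right\vert $ preserves membership in $BV_{0}(\Omega)$ and the $L^{1}$-norm, and does not increase $\left\vert D\cdot\right\vert (\mathbb{R}^{N})$, so the equality in the hypothesis is preserved. The assumption $h(\Omega)\left\Vert u\right\Vert _{1}=\left\vert Du\right\vert (\mathbb{R}^{N})$ then rewrites as
\[
\int_{0}^{\infty}\bigl(P(E_{t})-h(\Omega)\left\vert E_{t}\right\vert \bigr)\mathrm{d}t=0.
\]
Since $E_{t}\subset\overline{\Omega}$, the geometric definition of $h(\Omega)$ forces the integrand to be nonnegative at every $t$ with $\left\vert E_{t}\right\vert >0$. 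Therefore it vanishes for almost every $t\geq0$, i.e.\ $P(E_{t})=h(\Omega)\left\vert E_{t}\right\vert $, so $E_{t}$ is a Cheeger set of $\Omega$ for almost every $t\geq0$.

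The converse direction is immediate: if $E\subset\overline{\Omega}$ is a Cheeger set of $\Omega$, then $\chi_{E}\in BV_{0}(\Omega)$ with $\left\Vert \chi_{E}\right\Vert _{1}=\left\vert E\right\vert $ and $\left\vert D\chi_{E}\right\vert (\mathbb{R}^{N})=P(E)$ by the very definition of distributional perimeter, so the quotient reduces to $P(E)/\left\vert E\right\vert =h(\Omega)$. The subtlest point, and the one where I expect the only real (though mild) difficulty, is justifying the coarea formula with the perimeter computed in $\mathbb{R}^{N}$ rather than in $\Omega$, since this is exactly the bookkeeping that ensures the boundary-trace term in $\left\vert Du\right\vert (\mathbb{R}^{N})$ is correctly distributed among the perimeters of the superlevel sets; this is classical (and underlies the identity (\ref{hbv}) itself), but must be invoked carefully.
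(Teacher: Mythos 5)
Your proof is correct and follows essentially the same route as the paper's: both apply the coarea formula together with the layer-cake (Cavalieri) representation to write $\left\vert Du\right\vert(\mathbb{R}^{N})-h(\Omega)\left\Vert u\right\Vert_{1}$ as $\int_{0}^{\infty}\bigl(P(E_{t})-h(\Omega)\left\vert E_{t}\right\vert\bigr)\mathrm{d}t$ and conclude from the pointwise nonnegativity of the integrand, with the converse handled identically via $P(E)=\left\vert D\chi_{E}\right\vert(\mathbb{R}^{N})$. Your added remarks on reducing to $u\geq0$ and on computing perimeters in $\mathbb{R}^{N}$ rather than in $\Omega$ are sensible refinements of the same argument, not a different approach.
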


\begin{proof}
Combining Coarea formula and Cavalieri's principle we find
\begin{equation}
0=\left\vert Du\right\vert (\mathbb{R}^{N})-h(\Omega)\left\Vert u\right\Vert
_{1}=\int_{0}^{\infty}(P(E_{t})-h(\Omega)\left\vert E_{t}\right\vert
)\mathrm{d}t. \label{prelima}%
\end{equation}

As $\left\vert E_{t}\right\vert >0$ a.e.$\,t\geq0$ we have that $P(E_{t}%
)-h(\Omega)\left\vert E_{t}\right\vert \geq0$ a.e.$\,t\geq0.$ Therefore, it
follows from (\ref{prelima}) that
\[
h(\Omega)=\frac{P(E_{t})}{\left\vert E_{t}\right\vert }\text{ \ a.e.\thinspace
}t\geq0.
\]

Now, if $E\subset\overline{\Omega}$ is a Cheeger set of $\Omega,$ then
$\chi_{E}\in BV_{0}(\Omega).$ As $P(E)=\left\vert D\chi_{E}\right\vert
(\mathbb{R}^{N})$ and $\left\Vert \chi_{E}\right\Vert _{1}=\left\vert
E\right\vert $ we have
\[
h(\Omega)=\frac{P(E)}{\left\vert E\right\vert }=\frac{\left\vert D\chi
_{E}\right\vert (\mathbb{R}^{N})}{\left\Vert \chi_{E}\right\Vert _{1}}.
\]

\end{proof}

\section{Proof of Theorem \ref{main}\label{sec3}}

We recall from the Introduction that $u_{p,q}$ (for $1<p<N$ and $0<q<p^{\ast}%
$) denotes the positive minimizer of the constrained minimization problem
(\ref{lpq}), so that $u_{p,q}\in W_{0}^{1,p}(\Omega),$
\[
u_{p,q}>0\text{ in }\Omega,\text{ \ }\left\Vert u_{p,q}\right\Vert
_{q}=1,\text{ \ }\lambda_{p,q}=\left\Vert \nabla u_{p,q}\right\Vert _{p}^{p},
\]
and $u_{p,q}$ is a weak solution to (\ref{Dir}).

If $q=p$ the Dirichlet problem (\ref{Dir}) is homogeneous and thus it\ can be
recognized as an eigenvalue problem. In this setting, $\lambda_{p,p}$ is known
as the first eigenvalue of the Dirichlet $p$-Laplacian. Actually,
$\lambda_{p,p}$ is simple no sense that the set of its corresponding
eigenfunctions is generated by $u_{p,p},$ that is: $w\in W_{0}^{1,p}(\Omega)$
is a nontrivial weak solution to%
\begin{equation}
\left\{
\begin{array}
[c]{lll}%
-\operatorname{div}\left(  \left\vert \nabla u\right\vert ^{p-2}\nabla
u\right)  =\lambda_{p,p}\left\vert u\right\vert ^{p-2}u & \text{\textup{in}} &
\Omega\\
u=0 & \text{\textup{on}} & \partial\Omega
\end{array}
\right.  \label{Dirp}%
\end{equation}
if and only if $w=ku_{p,p}$ for some $k\in\mathbb{R}\setminus\left\{
0\right\}  .$

In this section, we prove Theorem \ref{main} by assuming that $\partial\Omega$
is smooth enough to ensure that $u_{p,q}\in C^{1}(\overline{\Omega}).$ In
consequence, $u_{p,q}\in BV_{0}(\Omega)$ (after extended as zero on
$\mathbb{R}^{N}\setminus\overline{\Omega}$) and%
\[
\left\vert Du_{p,q}\right\vert (\mathbb{R}^{N})=\left\Vert \nabla
u_{p,q}\right\Vert _{1},
\]
since%
\[
\left\vert Du_{p,q}\right\vert (\Omega)=\left\Vert \nabla u_{p,q}\right\Vert
_{1}\text{ \ and \ }\int_{\partial\Omega}\left\vert u_{p,q}\right\vert
\mathrm{d}\mathcal{H}^{N-1}=0.
\]

The next result is adapted from Lemma 5 of \cite{Er13} established there for
$1\leq q<p^{\ast}$.

\begin{lemma}
\label{estim}Let $u\in W_{0}^{1,p}(\Omega)\cap C^{1}(\overline{\Omega})$ be a
positive weak solution to the Dirichlet problem
\begin{equation}
\left\{
\begin{array}
[c]{lll}%
-\operatorname{div}\left(  \left\vert \nabla u\right\vert ^{p-2}\nabla
u\right)  =\lambda\left\vert u\right\vert ^{q-2}u & \text{\textup{in}} &
\Omega\\
u=0 & \text{\textup{on}} & \partial\Omega
\end{array}
\right.  \label{Dirl}%
\end{equation}
with $0\leq q<p^{\ast}$ and $\lambda>0.$ If $\sigma\geq1,$ then%
\begin{equation}
C_{\lambda,\sigma,q}\left\Vert u\right\Vert _{\infty}^{\frac{N(p-q)+p\sigma
}{p}}\leq\left\Vert u\right\Vert _{\sigma}^{\sigma}, \label{est9}%
\end{equation}
where%
\[
C_{\lambda,\sigma,q}:=\left(  \frac{\lambda_{p,p^{\ast}}}{\lambda}\right)
^{\frac{N}{p}}\left(  \frac{p}{p+N(p-1)}\right)  ^{N+1}I_{\sigma,q}%
\]
and%
\[
I_{\sigma,q}:=\left\{
\begin{array}
[c]{lll}%
\sigma\int_{0}^{1}\left(  1-\tau\right)  ^{\frac{N(p-1)}{p}}\tau
^{\frac{N(1-q)}{p}+\sigma-1}\mathrm{d}\tau & \text{if} & 0\leq q<1\\
\sigma%
{\displaystyle\int_{0}^{1}}
(1-\tau)^{\frac{N(p-1)}{p}}\tau^{\sigma-1}\mathrm{d}\tau & \text{if} & 1\leq
q<p^{\ast}.
\end{array}
\right.
\]

\end{lemma}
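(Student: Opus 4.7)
The plan is to extract a pointwise lower bound on the distribution function $\mu(t):=|\{u>t\}|$ and then reconstruct $\|u\|_\sigma^\sigma$ via the layer--cake identity $\|u\|_\sigma^\sigma = \sigma\int_0^M t^{\sigma-1}\mu(t)\,\mathrm{d}t$, where $M:=\|u\|_\infty$. Along the way I will also use $U(t):=\int_{\{u>t\}}(u-t)\,\mathrm{d}x$, which satisfies $-U'(t)=\mu(t)$.

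First, test (\ref{Dirl}) against $(u-t)_+\in W_0^{1,p}(\Omega)$ to obtain
\[
\|\nabla(u-t)_+\|_p^p=\lambda\int_{\{u>t\}}u^{q-1}(u-t)\,\mathrm{d}x\le \lambda\,g(t)\,U(t),
\]
where I bound $u^{q-1}\le g(t)$ on $\{u>t\}$ by taking $g(t):=M^{q-1}$ in the regular regime $1\le q<p^{\ast}$ (from $u\le M$) and $g(t):=t^{q-1}$ in the singular regime $0\le q<1$ (from $u>t$ and $q-1<0$). Combining the Sobolev inequality $\lambda_{p,p^{\ast}}\|(u-t)_+\|_{p^{\ast}}^p\le \|\nabla(u-t)_+\|_p^p$ with H\"older's inequality $U(t)\le \mu(t)^{1-1/p^{\ast}}\|(u-t)_+\|_{p^{\ast}}$ eliminates $\|(u-t)_+\|_{p^{\ast}}$ and yields the basic estimate
\[
U(t)^{p-1}\le \frac{\lambda\,g(t)}{\lambda_{p,p^{\ast}}}\,\mu(t)^{(N(p-1)+p)/N}.
\]

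I then rewrite this as the differential inequality $-U'(t)=\mu(t)\ge \bigl(\lambda_{p,p^{\ast}}/(\lambda g(t))\bigr)^{N/(N(p-1)+p)}U(t)^{N(p-1)/(N(p-1)+p)}$, separate variables, and integrate from $t$ to $M$ (using $U(M)=0$) to recover a lower bound on $U(t)$. In the regime $q\ge 1$ this gives $U(t)\gtrsim (M-t)^{(N(p-1)+p)/p}$. In the regime $0\le q<1$, integration first produces a factor $M^{\gamma+1}-t^{\gamma+1}$ with $\gamma:=N(1-q)/(N(p-1)+p)$; I then apply the mean--value bound $M^{\gamma+1}-t^{\gamma+1}\ge (\gamma+1)\,t^{\gamma}(M-t)$ to reach a clean product $t^{\gamma(N(p-1)+p)/p}(M-t)^{(N(p-1)+p)/p}$. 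Substituting back into $\mu(t)\ge \bigl(\lambda_{p,p^{\ast}}/(\lambda g(t))\bigr)^{N/(N(p-1)+p)}U(t)^{N(p-1)/(N(p-1)+p)}$, the exponents of $\lambda_{p,p^{\ast}}/\lambda$ and of $g(t)$ collapse (as forced by dimensional analysis) to produce
\[
\mu(t)\ge \left(\frac{\lambda_{p,p^{\ast}}}{\lambda}\right)^{\!N/p}\!\left(\frac{p}{p+N(p-1)}\right)^{\!N(p-1)/p}\!t^{N(1-q)/p}(M-t)^{N(p-1)/p},
\]
with the convention that $t^{N(1-q)/p}$ is replaced by $M^{N(1-q)/p}$ when $q\ge 1$.

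Finally, I plug this into $\|u\|_\sigma^\sigma=\sigma\int_0^M t^{\sigma-1}\mu(t)\,\mathrm{d}t$ and substitute $\tau=t/M$: the resulting $\tau$-integral is exactly $I_{\sigma,q}/\sigma$ in both regimes, while the accumulated $M$-powers sum to $(N(p-q)+p\sigma)/p$, which is the required exponent of $\|u\|_\infty$. Since $0<p/(p+N(p-1))<1$ and $N+1\ge N(p-1)/p$, the slightly weaker constant $(p/(p+N(p-1)))^{N+1}$ stated in the lemma follows a fortiori. The main obstacle is the exponent bookkeeping: one must verify that the exponents of $\lambda_{p,p^{\ast}}/\lambda$, of $M$, and of $g(t)$ align (all traceable to the identity $N/(N(p-1)+p)+N^{2}(p-1)/[p(N(p-1)+p)]=N/p$), and that the mean--value trick in the singular case is precisely what is needed to convert the integrated factor $M^{\gamma+1}-t^{\gamma+1}$ into the clean product $t^{\gamma}(M-t)$ so that the layer--cake integral reassembles into the beta--type constant $I_{\sigma,q}$.
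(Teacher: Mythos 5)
Your proposal is correct and follows essentially the same route as the paper: test against $(u-t)_+$, combine Sobolev and H\"older to get a differential inequality for the excess function, integrate it, and reassemble $\|u\|_\sigma^\sigma$ by the layer--cake formula. The only differences are cosmetic — you keep $U(t)$ rather than bounding $(u-t)\le M-t$ at the outset and use a mean--value bound where the paper bounds $s^{1-q}\ge t^{1-q}$ under the integral — and your resulting constant $(p/(p+N(p-1)))^{N(p-1)/p}$ is slightly sharper than the stated one, which you correctly observe follows a fortiori.
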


\begin{proof}
For each $0<t<\left\Vert u\right\Vert _{\infty}$ let us define
\[
(u-t)_{+}:=\max\left\{  u-t,0\right\}  \text{ \ and \ }A_{t}:=\left\{
x\in\Omega:u(x)>t\right\}  .
\]

As $(u-t)_{+}\in W_{0}^{1,p}(\Omega)$ and $u$ is a positive weak solution to
(\ref{Dirl}), we have%
\begin{equation}
\int_{A_{t}}\left\vert \nabla u\right\vert ^{p}\mathrm{d}x=\lambda\int
_{\Omega}\left\vert \nabla u\right\vert ^{p-2}\nabla u\cdot\nabla
(u-t)_{+}\mathrm{d}x=\lambda\int_{\Omega}u^{q-1}(u-t)_{+}\mathrm{d}%
x=\lambda\int_{A_{t}}u^{q-1}(u-t)\mathrm{d}x. \label{est00}%
\end{equation}

We also have%
\begin{equation}
\left(  \int_{A_{t}}(u-t)\mathrm{d}x\right)  ^{p}\leq\left\vert A_{t}%
\right\vert ^{p-\frac{p}{p^{\ast}}}\left(  \int_{A_{t}}(u-t)^{p^{\ast}%
}\mathrm{d}x\right)  ^{\frac{p}{p^{\ast}}}\leq\frac{\left\vert A_{t}%
\right\vert ^{p-\frac{p}{p^{\ast}}}}{\lambda_{p,p^{\ast}}}\int_{A_{t}%
}\left\vert \nabla u\right\vert ^{p}\mathrm{d}x, \label{est2}%
\end{equation}
where we have used H\"{o}lder's inequality and (\ref{Q}). Note that
\[
\lambda_{p,p^{\ast}}\leq\frac{\left\Vert \nabla(u-t)_{+}\right\Vert _{p}^{p}%
}{\left\Vert (u-t)_{+}\right\Vert _{p^{\ast}}^{p}}=\frac{%
{\displaystyle\int_{A_{t}}}
\left\vert \nabla u\right\vert ^{p}\mathrm{d}x}{\left(
{\displaystyle\int_{A_{t}}}
(u-t)^{p^{\ast}}\mathrm{d}x\right)  ^{\frac{p}{p^{\ast}}}}.
\]

We divide the remaining of the proof in two cases.

\textbf{Case 1:} $0\leq q<1.$ As
\begin{equation}
\int_{A_{t}}u^{q-1}(u-t)\mathrm{d}x\leq t^{q-1}\int_{A_{t}}(\left\Vert
u\right\Vert _{\infty}-t)\mathrm{d}x \label{est0}%
\end{equation}
we obtain from (\ref{est00}) the estimate%
\begin{equation}
\int_{A_{t}}\left\vert \nabla u\right\vert ^{p}\mathrm{d}x\leq\lambda
t^{q-1}(\left\Vert u\right\Vert _{\infty}-t)\left\vert A_{t}\right\vert .
\label{est1}%
\end{equation}

Combining (\ref{est1}) and (\ref{est2}), we obtain the inequalities%
\[
\lambda_{p,p^{\ast}}\left\vert A_{t}\right\vert ^{-p+\frac{p}{p^{\ast}}%
}\left(  \int_{A_{t}}(u-t)\mathrm{d}x\right)  ^{p}\leq\int_{A_{t}}\left\vert
\nabla u\right\vert ^{p}\mathrm{d}x\leq\lambda t^{q-1}(\left\Vert u\right\Vert
_{\infty}-t)\left\vert A_{t}\right\vert
\]
which lead to
\begin{equation}
\frac{\lambda_{p,p^{\ast}}t^{1-q}}{\lambda(\left\Vert u\right\Vert _{\infty
}-t)}\left(  \int_{A_{t}}(u-t)\mathrm{d}x\right)  ^{p}\leq\left\vert
A_{t}\right\vert ^{p(1-\frac{1}{p^{\ast}}+\frac{1}{p})}=\left\vert
A_{t}\right\vert ^{p(\frac{N-1}{N})}. \label{est3}%
\end{equation}

Now, let us define the function%
\[
g(t):=\int_{A_{t}}(u-t)\mathrm{d}x.
\]

It is simple to verify that%
\[
g(t)=\int_{t}^{\left\Vert u\right\Vert _{\infty}}\left\vert A_{s}\right\vert
\mathrm{d}s,
\]
so that%
\[
g^{\prime}(t)=-\left\vert A_{t}\right\vert .
\]

Then, (\ref{est3}) can be rewritten as%
\begin{equation}
\left(  \frac{\lambda_{p,p^{\ast}}}{\lambda}\right)  ^{\frac{N}{p(N+1)}%
}\left(  \frac{t^{1-q}}{\left\Vert u\right\Vert _{\infty}-t}\right)
^{\frac{N}{p(N+1)}}\leq-g^{\prime}(t)g(t)^{-\frac{N}{N+1}}. \label{est4}%
\end{equation}

Integration of the right-hand side of (\ref{est4}) over $[t,\left\Vert
u\right\Vert _{\infty}]$ yields
\begin{equation}
-\int_{t}^{\left\Vert u\right\Vert _{\infty}}g^{\prime}(s)g(s)^{-\frac{N}%
{N-1}}\mathrm{d}s=(N+1)g(t)^{\frac{1}{N+1}}-(N+1)g(\left\Vert u\right\Vert
_{\infty})^{\frac{1}{N+1}}\leq(N+1)g(t)^{\frac{1}{N+1}} \label{est5}%
\end{equation}
whereas integration of the function at the left-hand side of (\ref{est4}) over
$[t,\left\Vert u\right\Vert _{\infty}]$ yields
\begin{align}
\int_{t}^{\left\Vert u\right\Vert _{\infty}}\left(  \frac{s^{1-q}}{\left\Vert
u\right\Vert _{\infty}-s}\right)  ^{\frac{N}{p(N+1)}}\mathrm{d}s  &  \geq
t^{\frac{N(1-q)}{p(N+1)}}\int_{t}^{\left\Vert u\right\Vert _{\infty}}\left(
\left\Vert u\right\Vert _{\infty}-s\right)  ^{-\frac{N}{p(N+1)}}%
\mathrm{d}s\label{est6}\\
&  =\frac{p(N+1)}{p+N(p-1)}t^{\frac{N(1-q)}{p(N+1)}}\left(  \left\Vert
u\right\Vert _{\infty}-t\right)  ^{\frac{p+N(p-1)}{p(N+1)}}.\nonumber
\end{align}

Thus, after integrating (\ref{est4}) we obtain from (\ref{est5}) and
(\ref{est6}) the inequality%
\begin{equation}
\left(  \frac{\lambda_{p,p^{\ast}}}{\lambda}\right)  ^{\frac{N}{p}}\left(
\frac{p}{p+N(p-1)}\right)  ^{N+1}t^{\frac{N(1-q)}{p}}\left(  \left\Vert
u\right\Vert _{\infty}-t\right)  ^{\frac{p+N(p-1)}{p}.}\leq g(t). \label{est7}%
\end{equation}

As $g(t)\leq(\left\Vert u\right\Vert _{\infty}-t)\left\vert A_{t}\right\vert
$, it follows from (\ref{est7}) that
\[
\left(  \frac{\lambda_{p,p^{\ast}}}{\lambda}\right)  ^{\frac{N}{p}}\left(
\frac{p}{p+N(p-1)}\right)  ^{N+1}t^{\frac{N(1-q)}{p}}\left(  \left\Vert
u\right\Vert _{\infty}-t\right)  ^{\frac{N(p-1)}{p}}\leq\left\vert
A_{t}\right\vert .
\]

Now, for a given $\sigma\geq1,$ we multiply the latter inequality by $\sigma
t^{\sigma-1}$ and integrate over $[0,\left\Vert u\right\Vert _{\infty}]$ to
get (\ref{est9}) after noticing that%
\[
\sigma\int_{0}^{\left\Vert u\right\Vert _{\infty}}\left\vert A_{t}\right\vert
t^{\sigma-1}\mathrm{d}t=\int_{\Omega}u^{\sigma}\mathrm{d}x,
\]
and that the change of variable $t=\left\Vert u\right\Vert _{\infty}\tau$
yields%
\[
\int_{0}^{\left\Vert u\right\Vert _{\infty}}t^{\frac{N(1-q)}{p}+\sigma
-1}\left(  \left\Vert u\right\Vert _{\infty}-t\right)  ^{\frac{N(p-1)}{p}%
}\mathrm{d}t=\left\Vert u\right\Vert _{\infty}^{\frac{N(p-q)}{p}+\sigma}%
\int_{0}^{1}\left(  1-\tau\right)  ^{\frac{N(p-1)}{p}}\tau^{\frac{N(1-q)}%
{p}+\sigma-1}\mathrm{d}\tau.
\]

\textbf{Case 2:} $1\leq q<p^{\ast}.$ The factor $t^{q-1}$ in (\ref{est0}) can
be replaced with $\left\Vert u\right\Vert _{\infty}^{q-1},$ so that
(\ref{est4}) and (\ref{est6}) become
\[
\left(  \frac{\lambda_{p,p^{\ast}}}{\lambda}\right)  ^{\frac{N}{p(N+1)}%
}\left(  \frac{\left\Vert u\right\Vert _{\infty}^{1-q}}{\left\Vert
u\right\Vert _{\infty}-t}\right)  ^{\frac{N}{p(N+1)}}\leq-g^{\prime
}(t)g(t)^{-\frac{N}{N+1}}%
\]
and
\[
\int_{t}^{\left\Vert u\right\Vert _{\infty}}\left(  \frac{\left\Vert
u\right\Vert _{\infty}^{1-q}}{\left\Vert u\right\Vert _{\infty}-s}\right)
^{\frac{N}{p(N+1)}}\mathrm{d}s=\frac{p(N+1)}{p+N(p-1)}\left\Vert u\right\Vert
_{\infty}^{\frac{N(1-q)}{p(N+1)}}\left(  \left\Vert u\right\Vert _{\infty
}-t\right)  ^{\frac{p+N(p-1)}{p(N+1)}},
\]
respectively. Hence, we obtain from (\ref{est5}) that%
\[
\left(  \frac{\lambda_{p,p^{\ast}}}{\lambda}\right)  ^{\frac{N}{p(N+1)}}%
\frac{p(N+1)}{p+N(p-1)}\left\Vert u\right\Vert _{\infty}^{\frac{N(1-q)}%
{p(N+1)}}\left(  \left\Vert u\right\Vert _{\infty}-t\right)  ^{\frac
{p+N(p-1)}{p(N+1)}}\leq(N+1)g(t)^{\frac{1}{N+1}}.
\]

Then, using that $g(t)\leq(\left\Vert u\right\Vert _{\infty}-t)\left\vert
A_{t}\right\vert $ the latter inequality leads to
\begin{equation}
\left(  \frac{\lambda_{p,p^{\ast}}}{\lambda}\right)  ^{\frac{N}{p}}\left(
\frac{p}{p+N(p-1)}\right)  ^{N+1}\left\Vert u\right\Vert _{\infty}%
^{\frac{N(1-q)}{p}}\left(  \left\Vert u\right\Vert _{\infty}-t\right)
^{\frac{N(p-1)}{p}}\leq\left\vert A_{t}\right\vert . \label{est8a}%
\end{equation}

Multiplying (\ref{est8a}) by $\sigma t^{\sigma-1}$ and integrate over
$[0,\left\Vert u\right\Vert _{\infty}]$ we arrive at (\ref{est9}) with
\[
I_{\sigma,q}=\sigma%
{\displaystyle\int_{0}^{1}}
(1-\tau)^{\frac{N(p-1)}{p}}\tau^{\sigma-1}\mathrm{d}\tau.
\]

\end{proof}

\begin{remark}
\label{s=q}The estimate (\ref{est9}) can be rewritten as%
\[
C_{\lambda,\sigma,q}\left\Vert u\right\Vert _{\infty}^{\frac{N}{p^{\ast}%
}(p^{\ast}-q)+(\sigma-q)}\leq\left\Vert u\right\Vert _{\sigma}^{\sigma}.
\]

\end{remark}

In the sequel, $e_{p}$ denotes the $L^{\infty}$-normalized minimizer
corresponding to $\lambda_{p,p}$, that is:%
\begin{equation}
e_{p}:=\dfrac{u_{p,p}}{\left\Vert u_{p,p}\right\Vert _{\infty}}. \label{ep}%
\end{equation}

As $e_{p}$ is also a positive weak solution to the homogeneous Dirichlet
problem (\ref{Dirp}), Lemma \ref{estim} applied to $e_{p,}$ with $q=p,$
$\sigma=1$ and $\lambda=\lambda_{p,p},$ yields%
\[
\left(  \frac{\lambda_{p,p^{\ast}}}{\lambda_{p,p}}\right)  ^{\frac{N}{p}%
}\left(  \frac{p}{p+N(p-1)}\right)  ^{N+1}%
{\displaystyle\int_{0}^{1}}
(1-\tau)^{\frac{N(p-1)}{p}}\mathrm{d}\tau\leq\left\Vert e_{p}\right\Vert
_{1}.
\]

Hence, we have%
\begin{equation}
0<\left\vert \Omega\right\vert \left(  \frac{h(\Omega^{\star})}{h(\Omega
)}\right)  ^{N}\leq\liminf_{p\rightarrow1^{+}}\left\Vert e_{p}\right\Vert
_{1}, \label{blamb1}%
\end{equation}
since%
\[
\lim_{p\rightarrow1^{+}}\left(  \frac{\lambda_{p,p^{\ast}}}{\lambda_{p,p}%
}\right)  ^{\frac{N}{p}}=\left\vert \Omega\right\vert \left(  \frac
{h(\Omega^{\star})}{h(\Omega)}\right)  ^{N}%
\]
and
\begin{equation}
\lim_{p\rightarrow1^{+}}\left(  \frac{p}{p+N(p-1)}\right)  ^{N+1}%
=\lim_{p\rightarrow1^{+}}%
{\displaystyle\int_{0}^{1}}
(1-\tau)^{\frac{N(p-1)}{p}}\mathrm{d}\tau=1. \label{blamb5}%
\end{equation}

\begin{lemma}
\label{lem1}If $q_{n}\rightarrow1$ and $p_{n}\rightarrow1^{+},$ then (up to a
subsequence) $e_{p_{n}}$ converges in $L^{1}(\Omega)$ to a function $e.$
Moreover,
\begin{equation}
\lim_{n\rightarrow\infty}\int_{\Omega}e_{p_{n}}^{q_{n}}\mathrm{d}%
x=\lim_{n\rightarrow\infty}\int_{\Omega}e_{p_{n}}^{p_{n}}\mathrm{d}%
x=\left\Vert e\right\Vert _{1}>0. \label{blamb4}%
\end{equation}

\end{lemma}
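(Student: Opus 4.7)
The plan is to produce a uniform bound on $(e_{p_n})$ in $BV_0(\Omega)$, extract an $L^{1}$-convergent subsequence, and then pass to the limit in both integrals by dominated convergence.

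First I would derive a $W^{1,1}_0$ bound on $e_p$ using the $L^\infty$ normalization. Since $u_{p,p}\ge 0$ and $\|u_{p,p}\|_p=1$, H\"older's inequality gives $\|u_{p,p}\|_\infty \ge |\Omega|^{-1/p}$, so
\[
\int_\Omega |\nabla e_p|^p\,\mathrm{d}x = \frac{\lambda_{p,p}}{\|u_{p,p}\|_\infty^{p}}\le \lambda_{p,p}\,|\Omega|,
\]
and a further application of H\"older yields
\[
\int_\Omega |\nabla e_p|\,\mathrm{d}x \le |\Omega|^{1-1/p}\bigl(\lambda_{p,p}|\Omega|\bigr)^{1/p}=|\Omega|\,\lambda_{p,p}^{1/p}.
\]
By (\ref{pp}), $\lambda_{p,p}^{1/p}\to h(\Omega)$, so the right-hand side stays bounded as $p\to 1^+$. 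Combined with $0\le e_p\le 1$, this shows $(e_{p_n})$ is bounded in $W^{1,1}_0(\Omega)\subset BV_0(\Omega)$, and by the compact embedding $BV_0(\Omega)\hookrightarrow L^1(\Omega)$ we may extract a subsequence $e_{p_n}\to e$ in $L^1(\Omega)$ with $0\le e\le 1$. Passing to a further subsequence, $e_{p_n}\to e$ a.e.\ in $\Omega$.

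Next I would prove $e_{p_n}^{q_n}\to e$ a.e. At points where $e(x)>0$ this is immediate from joint continuity of $(t,q)\mapsto t^q$ on $(0,1]\times(0,\infty)$, since $q_n\to 1$. At points where $e(x)=0$, given $\varepsilon\in(0,1)$ one has $e_{p_n}(x)<\varepsilon$ and $q_n\ge 1/2$ for all large $n$, hence $e_{p_n}(x)^{q_n}\le\varepsilon^{1/2}$; letting $\varepsilon\to 0$ gives $e_{p_n}(x)^{q_n}\to 0=e(x)$. Since $0\le e_{p_n}^{q_n}\le 1$, dominated convergence yields
\[
\lim_{n\to\infty}\int_\Omega e_{p_n}^{q_n}\,\mathrm{d}x = \int_\Omega e\,\mathrm{d}x = \|e\|_1,
\]
and the same reasoning, with $q_n$ replaced by $p_n\to 1^+$, handles $\int_\Omega e_{p_n}^{p_n}\,\mathrm{d}x$.

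Finally, the strict positivity $\|e\|_1>0$ is immediate from (\ref{blamb1}), as
\[
\|e\|_1=\lim_{n\to\infty}\|e_{p_n}\|_1\ge |\Omega|\left(\frac{h(\Omega^\star)}{h(\Omega)}\right)^{N}>0.
\]
The main obstacle I anticipate is the a.e.\ convergence of $e_{p_n}^{q_n}$ near the zero set of $e$, especially when $q_n$ approaches $1$ from below so that the map $t\mapsto t^{q_n}$ lies above $t\mapsto t$ on $[0,1]$; the $L^\infty$-normalization provides the uniform envelope needed for dominated convergence, and the eventual inequality $q_n\ge 1/2$ keeps the exponent uniformly away from zero.
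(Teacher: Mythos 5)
Your proposal is correct and follows essentially the same route as the paper: a uniform $BV$ bound via H\"older and \eqref{pp}, compactness of $BV(\Omega)\hookrightarrow L^{1}(\Omega)$ to get $L^{1}$ and a.e.\ convergence, domination by $1$ for the two integrals, and \eqref{blamb1} for the positivity of $\|e\|_{1}$. Your extra care with the gradient bound (using $\|u_{p,p}\|_{\infty}\geq|\Omega|^{-1/p}$) and with the a.e.\ convergence of $e_{p_n}^{q_n}$ near the zero set of $e$ only makes explicit details the paper leaves implicit.
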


\begin{proof}
We have $\left\Vert e_{p}\right\Vert _{1}\leq\left\Vert e_{p}\right\Vert
_{\infty}\left\vert \Omega\right\vert =\left\vert \Omega\right\vert $ and, by
H\"{o}lder inequality,
\[
\left\Vert \nabla e_{p}\right\Vert _{1}\leq\left\Vert \nabla e_{p}\right\Vert
_{p}\left\vert \Omega\right\vert ^{1-\frac{1}{p}}=\lambda_{p,p}^{\frac{1}{p}%
}\left\vert \Omega\right\vert ^{1-\frac{1}{p}}.
\]
Hence, it follows from (\ref{pp}) that the family $\left(  e_{p}\right)  $ is
uniformly bounded in $BV(\Omega).$ Therefore, owing to the compactness of the
embedding $BV(\Omega)\hookrightarrow L^{1}(\Omega)$, we can assume that (up to
a subsequence) $e_{p_{n}}$ converges to a function $e$ in $L^{1}(\Omega)$ and
also poinwise almost everywhere in $\Omega.$ In view of (\ref{blamb1}) the
convergence in $L^{1}(\Omega)$ shows that $\left\Vert e\right\Vert _{1}>0.$ As
the nonnegative functions $e_{p_{n}}^{q_{n}}$ and $e_{p_{n}}^{p_{n}}$ are
dominated by $1,$ the convergence a.e.\thinspace in $\Omega$ leads to the
equalities in (\ref{blamb4}).
\end{proof}

\begin{lemma}
\label{Pa}If $0<q(p)<p^{\ast}$ and $\lim\limits_{p\rightarrow1^{+}}q(p)=1,$
then%
\begin{equation}
\limsup_{p\rightarrow1^{+}}\lambda_{p,q(p)}\leq h(\Omega) \label{l<h}%
\end{equation}
and
\begin{equation}
\limsup_{p\rightarrow1^{+}}\left\Vert u_{p,q(p)}\right\Vert _{1}\leq1.
\label{l1<1}%
\end{equation}

\end{lemma}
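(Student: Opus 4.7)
The plan is to prove the two inequalities separately, both by combining the variational characterization (\ref{Q}) with tools already assembled in Section \ref{sec3}.

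For (\ref{l<h}), I would use the $L^{\infty}$-normalized first eigenfunction $e_{p}$ from (\ref{ep}) as a test function in (\ref{Q}) with exponent $q(p)$. Since $e_{p}$ is a positive weak solution to (\ref{Dirp}), testing the equation against $e_{p}$ yields $\|\nabla e_{p}\|_{p}^{p} = \lambda_{p,p}\|e_{p}\|_{p}^{p}$, so (\ref{Q}) becomes
\[
\lambda_{p,q(p)} \leq \lambda_{p,p}\,\frac{\int_{\Omega} e_{p}^{p}\,\mathrm{d}x}{\left(\int_{\Omega} e_{p}^{q(p)}\,\mathrm{d}x\right)^{p/q(p)}}.
\]
Given an arbitrary sequence $p_{n}\to 1^{+}$, Lemma \ref{lem1} applied with $q_{n}=q(p_{n})\to 1$ produces a subsequence along which both $\int_{\Omega} e_{p_{n}}^{p_{n}}\,\mathrm{d}x$ and $\int_{\Omega} e_{p_{n}}^{q_{n}}\,\mathrm{d}x$ tend to the common positive limit $\|e\|_{1}>0$. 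Because $p_{n}/q(p_{n})\to 1$, the right-hand ratio converges to $1$, and (\ref{pp}) then delivers (\ref{l<h}) along the subsequence; since the sequence $p_{n}$ was arbitrary, (\ref{l<h}) follows in general.

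For (\ref{l1<1}), I would apply the $BV$-characterization (\ref{hbv}) directly to $u_{p,q(p)}$. The regularity recalled at the beginning of Section \ref{sec3} ensures that (after trivial extension) $u_{p,q(p)}\in BV_{0}(\Omega)$ with $|Du_{p,q(p)}|(\mathbb{R}^{N})=\|\nabla u_{p,q(p)}\|_{1}$. Since $\|u_{p,q(p)}\|_{q(p)}=1$ implies $\|\nabla u_{p,q(p)}\|_{p}^{p}=\lambda_{p,q(p)}$, Hölder's inequality gives
\[
h(\Omega)\,\|u_{p,q(p)}\|_{1}\;\leq\;\|\nabla u_{p,q(p)}\|_{1}\;\leq\;|\Omega|^{1-\frac{1}{p}}\,\lambda_{p,q(p)}^{\frac{1}{p}}.
\]
Taking $\limsup$ as $p\to 1^{+}$ and using (\ref{l<h}) together with $|\Omega|^{1-1/p}\to 1$ then yields (\ref{l1<1}).

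The main obstacle lies in the first step: the bounding quotient for $\lambda_{p,q(p)}$ would be indeterminate if $\|e\|_{1}$ vanished, so the non-degeneracy encoded in (\ref{blamb1})---itself a consequence of the a priori estimate in Lemma \ref{estim}---is essential. Once that positivity is secured, both parts of the lemma reduce to routine limit computations, with the full range $0<q(p)<p^{\ast}$ handled uniformly because Lemma \ref{lem1} accommodates both $q(p)<1$ and $q(p)\geq 1$.
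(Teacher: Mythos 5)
Your proof is correct. For (\ref{l<h}) it coincides with the paper's argument: both test (\ref{Q}) with the normalized first eigenfunction $e_{p}$, use the identity $\left\Vert \nabla e_{p}\right\Vert _{p}^{p}=\lambda_{p,p}\left\Vert e_{p}\right\Vert _{p}^{p}$, and invoke Lemma \ref{lem1} together with (\ref{pp}) and the non-degeneracy $\left\Vert e\right\Vert _{1}>0$ from (\ref{blamb1}) to see that the correcting ratio tends to $1$. The only divergence is in (\ref{l1<1}): the paper bounds $\left\Vert u_{p,q(p)}\right\Vert _{1}\leq\lambda_{p,p}^{-1/p}\lambda_{p,q(p)}^{1/p}\left\vert \Omega\right\vert ^{1-1/p}$ by applying (\ref{Q}) with respect to $\lambda_{p,p}$ and then uses the Kawohl--Fridman limit (\ref{pp}) a second time, whereas you instead apply the $BV$ characterization (\ref{hbv}) to get $h(\Omega)\left\Vert u_{p,q(p)}\right\Vert _{1}\leq\left\Vert \nabla u_{p,q(p)}\right\Vert _{1}\leq\left\vert \Omega\right\vert ^{1-1/p}\lambda_{p,q(p)}^{1/p}$ and conclude from (\ref{l<h}) alone. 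Both are one-line H\"{o}lder computations yielding the same bound; your variant is marginally more self-contained at this point (it does not re-invoke (\ref{pp}), only the definition of the Cheeger constant and the regularity $u_{p,q}\in C^{1}(\overline{\Omega})$ ensuring $u_{p,q(p)}\in BV_{0}(\Omega)$), while the paper's version stays entirely within the Rayleigh-quotient framework of $W_{0}^{1,p}(\Omega)$. No gaps.
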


\begin{proof}
Let us take $p_{n}\rightarrow1^{+}$ such that
\[
\lim_{n\rightarrow\infty}\lambda_{p_{n},q(p_{n})}=L:=\limsup_{p\rightarrow
1^{+}}\lambda_{p,q(p)}.
\]

Using (\ref{Q}) for $\lambda_{p_{n},q(p_{n})}$ and the definition of
$e_{p_{n}}$ we have that
\[
\lambda_{p_{n},q(p_{n})}\leq\frac{\left\Vert \nabla e_{p_{n}}\right\Vert
_{p_{n}}^{p_{n}}}{\left\Vert e_{p_{n}}\right\Vert _{q(p_{n})}^{p_{n}}}%
=\lambda_{p_{n},p_{n}}\left(  \frac{\left\Vert e_{p_{n}}\right\Vert _{p_{n}}%
}{\left\Vert e_{p_{n}}\right\Vert _{q(p_{n})}}\right)  ^{p_{n}}.
\]
Hence, we can apply Lemma \ref{lem1} to get (\ref{l<h}) from (\ref{pp}),
since
\[
L=\lim_{n\rightarrow\infty}\lambda_{p_{n},q(p_{n})}\leq\lim_{n\rightarrow
\infty}\lambda_{p_{n},p_{n}}\frac{\lim_{n\rightarrow\infty}\left\Vert
e_{p_{n}}\right\Vert _{p_{n}}}{\lim_{n\rightarrow\infty}\left\Vert e_{p_{n}%
}\right\Vert _{q(p_{n})}}=\lim_{n\rightarrow\infty}\lambda_{p_{n},p_{n}%
}=h(\Omega).
\]

Using H\"{o}lder's inequality and exploiting (\ref{Q}) with respect to
$\lambda_{p,p},$ we obtain%
\begin{equation}
\left\Vert u_{p,q(p)}\right\Vert _{1}\leq\left\Vert u_{p,q(p)}\right\Vert
_{p}\left\vert \Omega\right\vert ^{1-\frac{1}{p}}\leq\lambda_{p,p}^{-\frac
{1}{p}}\left\Vert \nabla u_{p,q(p)}\right\Vert _{p}\left\vert \Omega
\right\vert ^{1-\frac{1}{p}}=\lambda_{p,p}^{-\frac{1}{p}}\lambda
_{p,q(p)}^{\frac{1}{p}}\left\vert \Omega\right\vert ^{1-\frac{1}{p}}.
\label{pb}%
\end{equation}

Hence, (\ref{l1<1}) follows from (\ref{pp}) and (\ref{l<h}).
\end{proof}

\begin{lemma}
\label{linf}If $0<q(p)<p^{\ast}$ and $\lim\limits_{p\rightarrow1^{+}}q(p)=1,$
then%
\begin{equation}
\frac{1}{\left\vert \Omega\right\vert }\leq\liminf_{p\rightarrow1^{+}%
}\left\Vert u_{p,q(p)}\right\Vert _{\infty}\text{ \ and \ }\limsup
_{p\rightarrow1^{+}}\left\Vert u_{p,q(p)}\right\Vert _{\infty}\leq
\frac{h(\Omega)^{N}}{\left\vert \Omega\right\vert h(\Omega^{\star})^{N}}.
\label{linf1}%
\end{equation}

\end{lemma}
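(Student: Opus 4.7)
My plan is to treat the two bounds separately, both exploiting the normalization $\left\Vert u_{p,q(p)}\right\Vert_{q(p)}=1$; the lower bound is elementary, while the upper bound comes from applying Lemma~\ref{estim} and then taking limits with the help of Lemma~\ref{Pa}.

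For the liminf inequality, the pointwise bound $u_{p,q(p)}\leq\left\Vert u_{p,q(p)}\right\Vert_{\infty}$ immediately yields
\[
1=\int_{\Omega}u_{p,q(p)}^{q(p)}\,\mathrm{d}x\leq\left\Vert u_{p,q(p)}\right\Vert_{\infty}^{q(p)}\left\vert\Omega\right\vert,
\]
so that $\left\Vert u_{p,q(p)}\right\Vert_{\infty}\geq\left\vert\Omega\right\vert^{-1/q(p)}$; since $q(p)\rightarrow 1$, the liminf as $p\rightarrow 1^{+}$ is $\left\vert\Omega\right\vert^{-1}$.

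For the limsup inequality, I would apply Lemma~\ref{estim} to $u=u_{p,q(p)}$ (with $\lambda=\lambda_{p,q(p)}$) and $\sigma=1$, obtaining
\[
\left(\frac{\lambda_{p,p^{\ast}}}{\lambda_{p,q(p)}}\right)^{N/p}\!\left(\frac{p}{p+N(p-1)}\right)^{N+1}\!I_{1,q(p)}\left\Vert u_{p,q(p)}\right\Vert_{\infty}^{\alpha(p)}\leq\left\Vert u_{p,q(p)}\right\Vert_{1},
\]
where $\alpha(p):=(N(p-q(p))+p)/p\rightarrow 1$. I would then select $p_{n}\rightarrow 1^{+}$ realizing $\limsup_{p\rightarrow 1^{+}}\left\Vert u_{p,q(p)}\right\Vert_{\infty}=:M$, and by Lemma~\ref{Pa} pass to a further subsequence along which $\lambda_{p_{n},q(p_{n})}\rightarrow L\in[0,h(\Omega)]$ and $\left\Vert u_{p_{n},q(p_{n})}\right\Vert_{1}\rightarrow B\in[0,1]$. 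By (\ref{pp*}) the ratio $(\lambda_{p_{n},p_{n}^{\ast}}/\lambda_{p_{n},q(p_{n})})^{N/p_{n}}\rightarrow\left\vert\Omega\right\vert h(\Omega^{\star})^{N}/L^{N}$; by (\ref{blamb5}) the second front factor tends to $1$; and $I_{1,q(p_{n})}\rightarrow 1$ by dominated convergence, since its integrand is bounded by $1$ and converges pointwise to $1$ on $(0,1)$, regardless of whether $q(p_{n})$ lies above or below~$1$.

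Passing to the limit in the displayed inequality, if $M<\infty$ I obtain $\left\vert\Omega\right\vert h(\Omega^{\star})^{N}L^{-N}M\leq B\leq 1$, which combined with $L\leq h(\Omega)$ yields exactly $M\leq h(\Omega)^{N}/(\left\vert\Omega\right\vert h(\Omega^{\star})^{N})$. The case $M=\infty$ is ruled out because $\alpha(p_{n})$ is eventually at least $1/2$, so the left-hand side would diverge while the right-hand side stays bounded by $1$; the same principle (LHS blow-up against bounded RHS) forces $L>0$, preventing the front factor from degenerating. The main delicate point is thus coordinating these limits so that no factor degenerates; but the positive liminf on $\left\Vert u_{p,q(p)}\right\Vert_{\infty}$ established in the first step, together with $\limsup\lambda_{p,q(p)}\leq h(\Omega)$ from Lemma~\ref{Pa}, keeps both $M$ and $L$ in a compact positive range.
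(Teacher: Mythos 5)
Your proof is correct and follows the paper's strategy in all essentials: the lower bound is obtained exactly as in the paper from $1=\Vert u_{p,q(p)}\Vert_{q(p)}^{q(p)}\leq\Vert u_{p,q(p)}\Vert_{\infty}^{q(p)}\vert\Omega\vert$, and the upper bound comes from Lemma \ref{estim} applied to $u_{p,q(p)}$ together with the limits (\ref{pp*}), (\ref{blamb5}) and (\ref{l<h}). The one real difference is the choice of $\sigma$: the paper takes $\sigma=q(p)$ (this is the point of Remark \ref{s=q}), so the right-hand side of (\ref{est9}) is exactly $\Vert u_{p,q(p)}\Vert_{q(p)}^{q(p)}=1$ and the bound $\Vert u_{p,q(p)}\Vert_{\infty}\leq C_{p}^{-p^{\ast}/(N(p^{\ast}-q(p)))}$ has $\lambda_{p,q(p)}$ in the numerator of $C_{p}^{-1}$, so that only $\limsup\lambda_{p,q(p)}\leq h(\Omega)$ is needed and no nondegeneracy check arises. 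Your choice $\sigma=1$ leaves $\Vert u_{p,q(p)}\Vert_{1}$ on the right, which forces you to also invoke (\ref{l1<1}) and to rule out $M=\infty$ and $L=0$; you do handle both correctly (the front factor is bounded below since $\limsup\lambda_{p,q(p)}\leq h(\Omega)$, and $\Vert u_{p,q(p)}\Vert_{\infty}^{\alpha(p)}$ is bounded below by the first part), so the extra bookkeeping is harmless. A small bonus of your version is that $\sigma=1$ literally satisfies the hypothesis $\sigma\geq1$ of Lemma \ref{estim}, whereas the paper's use of $\sigma=q(p)$ with $q(p)<1$ tacitly relies on the fact that the lemma's proof works for all $\sigma>0$.
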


\begin{proof}
The first estimate in (\ref{linf1}) is immediate since
\[
1=\left\Vert u_{p,q(p)}\right\Vert _{q(p)}^{q(p)}\leq\left\Vert u_{p,q(p)}%
\right\Vert _{\infty}^{q(p)}\left\vert \Omega\right\vert .
\]

According to Remark \ref{s=q} we have that%
\begin{equation}
C_{p}\left\Vert u_{p,q(p)}\right\Vert _{\infty}^{\frac{N(p^{\ast}%
-q(p))}{p^{\ast}}}\leq\left\Vert u_{p,q(p)}\right\Vert _{q(p)}^{q(p)}=1
\label{lf1}%
\end{equation}
where%
\[
C_{p}:=\left(  \frac{\lambda_{p,p^{\ast}}}{\lambda_{p,q(p)}}\right)
^{\frac{N}{p}}\left(  \frac{p}{p+N(p-1)}\right)  ^{N+1}I_{p}%
\]
and
\[
I_{p}:=\left\{
\begin{array}
[c]{lll}%
q(p)\int_{0}^{1}\left(  1-\tau\right)  ^{\frac{N(p-1)}{p}}\tau^{\frac
{N(1-q(p))}{p}+q(p)-1}\mathrm{d}\tau & \text{if} & 0\leq q(p)<1\\
q(p)%
{\displaystyle\int_{0}^{1}}
(1-\tau)^{\frac{N(p-1)}{p}}\tau^{q(p)-1}\mathrm{d}\tau & \text{if} & 1\leq
q(p)<p^{\ast}.
\end{array}
\right.
\]

It follows from (\ref{lf1}) that%
\[
\left\Vert u_{p,q(p)}\right\Vert _{\infty}\leq C_{p}^{-\frac{p^{\ast}%
}{N(p^{\ast}-q(p))}}.
\]
As%
\[
\lim_{p\rightarrow1^{+}}\frac{p^{\ast}}{N(p^{\ast}-q(p))}=1=\lim
_{p\rightarrow1^{+}}\left(  \frac{p+N(p-1)}{p}\right)  ^{N+1}=\lim
_{p\rightarrow1^{+}}I_{p},
\]
and%
\[
C_{p}^{-1}=\left(  \frac{\lambda_{p,q(p)}}{\lambda_{p,p^{\ast}}}\right)
^{\frac{N}{p}}\left(  \frac{p+N(p-1)}{p}\right)  ^{N+1}I_{p}^{-1}%
\]
we obtain the second estimate in (\ref{linf1}) from (\ref{pp*}) and (\ref{l<h}).
\end{proof}

\begin{proof}
[Proof of Theorem \ref{main}]Of course, (\ref{linf2}) follows directly from
(\ref{linf1}).

Let us prove (\ref{l1=1}). If $0<q(p)<1,$ then H\"{o}lder's inequality yields
\[
1=\left\Vert u_{p,q(p)}\right\Vert _{q(p)}^{q(p)}\leq\left\Vert u_{p,q(p)}%
\right\Vert _{1}\left\vert \Omega\right\vert ^{1-q(p)}%
\]
so that%
\[
1=\lim_{p\rightarrow1^{+}}\frac{1}{\left\vert \Omega\right\vert ^{1-q(p)}}%
\leq\liminf_{p\rightarrow1^{+}}\left\Vert u_{p,q(p)}\right\Vert _{1}.
\]

As for $1\leq q(p)<p^{\ast},$ we first note from (\ref{linf1}) that%
\[
\lim_{p\rightarrow1^{+}}\left\Vert u_{p,q(p)}\right\Vert _{\infty}%
^{q(p)-1}=1.
\]
Then, taking into account that%
\[
1=\left\Vert u_{p,q(p)}\right\Vert _{q(p)}^{q(p)}\leq\left\Vert u_{p,q(p)}%
\right\Vert _{\infty}^{q(p)-1}\left\Vert u_{p,q(p)}\right\Vert _{1}%
\]
we get%
\[
1=\lim_{p\rightarrow1^{+}}\frac{1}{\left\Vert u_{p,q(p)}\right\Vert _{\infty
}^{q(p)-1}}\leq\liminf_{p\rightarrow1^{+}}\left\Vert u_{p,q(p)}\right\Vert
_{1}.
\]

We have thus proved the estimate
\[
1\leq\liminf_{p\rightarrow1^{+}}\left\Vert u_{p,q(p)}\right\Vert _{1}%
\]
which, in view of (\ref{l1<1}), leads us to (\ref{l1=1}).

Exploiting (\ref{Q}) with respect to $\lambda_{p,p}$ again (see (\ref{pb})),
we obtain from (\ref{pp}) and (\ref{l1=1}) that%
\[
h(\Omega)=\lim_{p\rightarrow1^{+}}\left(  \lambda_{p,p}^{-1}\left\vert
\Omega\right\vert ^{1-p}\left\Vert u_{p,q(p)}\right\Vert _{1}^{p}\right)
\leq\liminf_{p\rightarrow1^{+}}\lambda_{p,q(p)}.
\]
Bearing in mind (\ref{l<h}), this proves (\ref{l=h}).

In order to complete the proof, let us take $p_{n}\rightarrow1^{+}$ and set
\[
q_{n}:=q(p_{n})\text{ \ and \ }u_{n}:=u_{p_{n},q_{n}}.
\]

Then, $\lambda_{p_{n},q_{n}}=\left\Vert \nabla u_{n}\right\Vert _{p_{n}%
}^{p_{n}},$ $\left\Vert u_{n}\right\Vert _{q_{n}}=1,$ and $\lim_{n\rightarrow
\infty}q_{n}=1.$ Moreover, it follows from (\ref{l1=1}) that%
\begin{equation}
\lim_{n\rightarrow\infty}\left\Vert u_{n}\right\Vert _{1}=1. \label{a2}%
\end{equation}

We note that
\[
\left\vert Du_{n}\right\vert (\mathbb{R}^{N})=\left\vert Du_{n}\right\vert
(\Omega)=\left\Vert \nabla u_{n}\right\Vert _{1}\leq\left\Vert \nabla
u_{n}\right\Vert _{p_{n}}\left\vert \Omega\right\vert ^{1-\frac{1}{p_{n}}%
}=\lambda_{p_{n},q_{n}}^{\frac{1}{p_{n}}}\left\vert \Omega\right\vert
^{1-\frac{1}{p_{n}}}.
\]

Hence, (\ref{l=h}) implies that
\begin{equation}
\limsup_{n\rightarrow\infty}\left\vert Du_{n}\right\vert (\mathbb{R}^{N}%
)\leq\lim_{n\rightarrow\infty}\lambda_{p_{n},q_{n}}^{\frac{1}{p_{n}}%
}\left\vert \Omega\right\vert ^{1-\frac{1}{p_{n}}}=h(\Omega). \label{a1}%
\end{equation}

We conclude from (\ref{a2}) and (\ref{a1}) that the sequence $\left(
u_{n}\right)  $ is bounded in $BV(\Omega).$ Thus, by the compactness of the
embedding $BV(\Omega)\hookrightarrow L^{1}(\Omega)$ we can assume (up to
passing to a subsequence) that $u_{n}\rightarrow u,$ in $L^{1}(\Omega)$ and
also poinwise almost everywhere\thinspace in $\Omega.$ Extending $u_{n}$ as
zero on $\mathbb{R}^{N}\setminus\overline{\Omega}$ we have that $u_{n}$
converges in $L^{1}(\mathbb{R}^{N})$ to $u$ extended as zero on $\mathbb{R}%
^{N}\setminus\overline{\Omega}.$

Owing to (\ref{a2}) we have $\left\Vert u\right\Vert _{1}=1,$ which confirms
item (a) and also implies that $u\in BV_{0}(\Omega).$ Hence, it follows from
(\ref{hbv}) that
\[
h(\Omega)\leq\frac{\left\vert Du\right\vert (\mathbb{R}^{N})}{\left\Vert
u\right\Vert _{1}}=\left\vert Du\right\vert (\mathbb{R}^{N}).
\]

Moreover, Proposition \ref{bv} and (\ref{a1}) yield
\[
\left\vert Du\right\vert (\mathbb{R}^{N})\leq\liminf_{n\rightarrow\infty
}\left\vert Du_{n}\right\vert (\mathbb{R}^{N})\leq h(\Omega),
\]
showing that $\left\vert Du\right\vert (\mathbb{R}^{N})=h(\Omega).$ Then, item
(c) is consequence of Proposition \ref{bv1}.

Now, let us prove item (b). Let us fix $r>1$ and $\epsilon>0.$ As
$q_{n}\rightarrow1,$ we have that $q_{n}<r$ for all $n\geq n_{0}$ and some
$n_{0}\in\mathbb{N}.$ Moreover, owing to the second estimate in (\ref{linf1})
we can also assume that%
\begin{equation}
\left\Vert u_{n}\right\Vert _{\infty}\leq\frac{h(\Omega)^{N}}{\left\vert
\Omega\right\vert h(\Omega^{\star})^{N}}+\epsilon,\text{ \ for all }n\geq
n_{0}. \label{a3}%
\end{equation}

By H\"{o}lder's inequality, we have%
\[
1=\left\Vert u_{n}\right\Vert _{q_{n}}\leq\left\Vert u_{n}\right\Vert
_{r}\left\vert \Omega\right\vert ^{\frac{1}{q_{n}}-\frac{1}{r}},
\]
so that%
\begin{equation}
\left\vert \Omega\right\vert ^{\frac{1}{r}-\frac{1}{q_{n}}}\leq\left\Vert
u_{n}\right\Vert _{r},\text{ \ for all }n\geq n_{0}. \label{a4}%
\end{equation}

We also have%
\begin{equation}
\left\Vert u_{n}\right\Vert _{r}^{r}\leq\left\Vert u_{n}\right\Vert _{\infty
}^{r-1}\left\Vert u_{n}\right\Vert _{1}\leq\left(  \frac{h(\Omega)^{N}%
}{\left\vert \Omega\right\vert h(\Omega^{\star})^{N}}+\epsilon\right)
^{r-1}\left\Vert u_{n}\right\Vert _{1},\text{ \ for all }n\geq n_{0}.
\label{a5}%
\end{equation}

Convergence dominated theorem and (\ref{a3}) imply that $u_{n}\rightarrow u$
in $L^{r}(\Omega).$ Hence, (\ref{a4}) and (\ref{a5}) imply that%
\begin{equation}
\left\vert \Omega\right\vert ^{\frac{1}{r}-1}\leq\left\Vert u\right\Vert
_{r}\leq\left(  \frac{h(\Omega)^{N}}{\left\vert \Omega\right\vert
h(\Omega^{\star})^{N}}+\epsilon\right)  ^{\frac{r-1}{r}}\left\Vert
u\right\Vert _{1}^{\frac{1}{r}}=\left(  \frac{h(\Omega)^{N}}{\left\vert
\Omega\right\vert h(\Omega^{\star})^{N}}+\epsilon\right)  ^{\frac{r-1}{r}}.
\label{a6}%
\end{equation}

As $r$ and $\epsilon$ are arbitrarily fixed, (\ref{a6}) implies that $u\in
L^{\infty}(\Omega)$ and
\[
\left\vert \Omega\right\vert ^{-1}\leq\left\Vert u\right\Vert _{\infty}%
\leq\frac{h(\Omega)^{N}}{\left\vert \Omega\right\vert h(\Omega^{\star})^{N}}.
\]

\end{proof}

As mentioned in the Introduction, right after Corollary \ref{LE1}, Bueno and
Ercole proved in \cite{BE11} that%
\[
\lim_{p\rightarrow1^{+}}\left\Vert v_{p,1}\right\Vert _{1}^{1-p}%
=h(\Omega)=\left\Vert v_{p,1}\right\Vert _{\infty}^{1-p}.
\]
As $\lambda_{p,1}=\left\Vert v_{p,1}\right\Vert _{1}^{1-p},$ a fact that was
not noticed in \cite{BE11}, the first equality above leads directly to
\begin{equation}
\lim_{p\rightarrow1^{+}}\lambda_{p,1}=h(\Omega), \label{p1}%
\end{equation}
which is (\ref{l=h}) in the case where $q(p)\equiv1.$ Thus, (\ref{p1})
combined with (\ref{pp}) and the monotonicity of the function $q\mapsto
\lambda_{p,q}\left\vert \Omega\right\vert ^{\frac{p}{q}}$ also produces
(\ref{l=h}) for $q(p)\in(1,p).$ However, this combination does not lead to the
same result for $q(p)\in(0,1)\cup(p,p^{\ast})$ as, for example, $q(p)=p^{\beta
}$ with $\beta<0$ or $\beta>1$ (and $p$ close to $1^{+}$). Our approach
combining (\ref{pp}) with Lemma \ref{estim} provides a unified proof to
(\ref{l=h}) as well as allows us to estimate the limit function $u.$

\section{Proof of Theorem \ref{main2}\label{sec4}}

In this section we prove Theorem \ref{main2}, by applying Picone's inequality
to $w_{p,q(p)}$ and $e_{p},$ where $e_{p}$ is the first eigenfunction defined
in (\ref{ep}).

\begin{proof}
[Proof of Theorem \ref{main2}]As
\[
\lambda_{p,q(p)}\leq\frac{\left\Vert \nabla w_{p,q(p)}\right\Vert _{p}^{p}%
}{\left\Vert w_{p,q(p)}\right\Vert _{q(p)}^{p}}=\left\Vert w_{p,q(p)}%
\right\Vert _{q(p)}^{q(p)-p}\leq\left\Vert w_{p,q(p)}\right\Vert _{\infty
}^{q(p)-p}\left\vert \Omega\right\vert ^{\frac{q(p)-p}{q(p)}}%
\]
we obtain from (\ref{l=h}) that
\begin{equation}
h(\Omega)\leq\liminf_{p\rightarrow1^{+}}\left\Vert w_{p,q(p)}\right\Vert
_{q(p)}^{q(p)-p}\leq\liminf_{p\rightarrow1^{+}}\left\Vert w_{p,q(p)}%
\right\Vert _{\infty}^{q(p)-p}. \label{t2d}%
\end{equation}

Applying Picone's inequality and using that $w_{p,q(p)}$ is a weak solution to
(\ref{Dir1}) we find
\begin{align*}
\lambda_{p}\int_{\Omega}e_{p}^{p}\mathrm{d}x  &  =\int_{\Omega}\left\vert
\nabla e_{p}\right\vert ^{p}\mathrm{d}x\\
&  \geq\int_{\Omega}\left\vert \nabla w_{p,q(p)}\right\vert ^{p-2}\nabla
w_{p,q(p)}\cdot\nabla(\frac{e_{p}^{p}}{w_{p,q(p)}^{p-1}})\mathrm{d}x\\
&  =\int_{\Omega}w_{p,q(p)}^{q(p)-1}\frac{e_{p}^{p}}{w_{p,q(p)}^{p-1}%
}\mathrm{d}x=\int_{\Omega}w_{p,q(p)}^{q(p)-p}e_{p}^{p}\mathrm{d}x.
\end{align*}
Hence,
\begin{equation}
\left\Vert w_{p,q(p)}\right\Vert _{\infty}^{q(p)-p}\int_{\Omega}W_{p}%
^{q(p)-p}e_{p}^{p}\mathrm{d}x\leq\lambda_{p}\int_{\Omega}e_{p}^{p}\mathrm{d}x
\label{pic}%
\end{equation}
where
\[
W_{p}:=\frac{w_{p,q(p)}}{\left\Vert w_{p,q(p)}\right\Vert _{\infty}}.
\]

Now, let us assume that
\[
L:=\limsup_{p\rightarrow1^{+}}\left\Vert w_{p,q(p)}\right\Vert _{\infty
}^{q(p)-p}<\infty.
\]

Using again that $w_{p,q(p)}$ is a weak solution to (\ref{Dir1}), we have from
Lemma \ref{estim}, with $\lambda=\sigma=1,$ that%
\begin{equation}
C_{p}\left\Vert w_{p,q(p)}\right\Vert _{\infty}^{\frac{N(p-q(p))}{p}%
}\left\Vert w_{p,q(p)}\right\Vert _{\infty}\leq\left\Vert w_{p,q(p)}%
\right\Vert _{1} \label{t2a}%
\end{equation}
where%
\[
C_{p}:=\lambda_{p,p^{\ast}}^{\frac{N}{p}}\left(  \frac{p}{p+N(p-1)}\right)
^{N+1}\sigma%
{\displaystyle\int_{0}^{1}}
(1-\tau)^{\frac{N(p-1)}{p}}\mathrm{d}\tau.
\]

It follows from (\ref{t2a}) that%
\[
\frac{C_{p}}{\left\Vert w_{p,q(p)}\right\Vert _{\infty}^{\frac{N(q(p)-p)}{p}}%
}\leq\left\Vert W_{p}\right\Vert _{1},
\]
so that (\ref{pp*}) and (\ref{blamb5}) yield%
\begin{equation}
0<\frac{\left\vert \Omega\right\vert h(\Omega^{\star})^{N}}{L^{N}}\leq
\liminf_{p\rightarrow1^{+}}\left\Vert W_{p}\right\Vert _{1}. \label{t2c}%
\end{equation}

We also have that%
\[
\left\Vert \nabla w_{p,q(p)}\right\Vert _{1}\leq\left\vert \Omega\right\vert
^{1-\frac{1}{p}}\left\Vert \nabla w_{p,q(p)}\right\Vert _{p}=\left\vert
\Omega\right\vert ^{1-\frac{1}{p}}\left\Vert w_{p,q(p)}\right\Vert
_{q(p)}^{\frac{q(p)}{p}}%
\]
so that%
\[
\left\vert DW_{p}\right\vert (\Omega)=\left\Vert \nabla W_{p}\right\Vert
_{1}\leq\left\vert \Omega\right\vert ^{1-\frac{1}{p}}\left\Vert w_{p,q(p)}%
\right\Vert _{\infty}^{\frac{q(p)-p}{p}}\left\Vert W_{p}\right\Vert
_{q(p)}^{\frac{q(p)}{p}}.
\]
Hence, as $\left\Vert W_{p}\right\Vert _{q(p)}^{\frac{q(p)}{p}}\leq\left\vert
\Omega\right\vert ^{\frac{1}{p}}$ and $\left\Vert W_{p}\right\Vert _{1}%
\leq\left\vert \Omega\right\vert ,$ we conclude that the family $\left(
W_{p}\right)  $ is uniformly bounded in $BV(\Omega).$

Now, let $p_{n}\rightarrow1^{+}$ be such that
\[
\lim_{n\rightarrow\infty}\left\Vert w_{p_{n},q(p_{n})}\right\Vert _{\infty
}^{q(p_{n})-p_{n}}=L.
\]

Owing to the compactness of $BV(\Omega)\hookrightarrow L^{1}(\Omega)$ we can
assume (passing to subsequences, if necessary) that $W_{p_{n}}\rightarrow W$
in $L^{1}(\Omega)$ and also poinwise almost everywhere in $\Omega.$ It follows
from (\ref{t2c}) that $W>0$ a.e. in $\Omega$ and this implies that $W_{p_{n}%
}^{q(p_{n})-p_{n}}\rightarrow1$ poinwise almost everywhere in $\Omega.$ As
$\left\Vert W_{p_{n}}^{q(p_{n})-p_{n}}\right\Vert _{\infty}\leq1,$ dominated
convergence theorem and Lemma \ref{lem1} guarantee that%
\[
\lim_{n\rightarrow\infty}\int_{\Omega}W_{p_{n}}^{q(p_{n})-p_{n}}e_{p_{n}%
}^{p_{n}}\mathrm{d}x=\int_{\Omega}e_{p_{n}}^{p_{n}}\mathrm{d}x=\left\Vert
e\right\Vert _{1}>0.
\]
Hence, (\ref{pic}) and (\ref{pp}) yield%
\[
L\left\Vert e\right\Vert _{1}\leq h(\Omega)\left\Vert e\right\Vert _{1},
\]
so that $L\leq h(\Omega).$ Combining this inequality with (\ref{t2d}) we
conclude that
\[
h(\Omega)=\lim_{p\rightarrow1^{+}}\left\Vert w_{p,q(p)}\right\Vert
_{q(p)}^{q(p)-p}=\lim_{p\rightarrow1^{+}}\left\Vert w_{p,q(p)}\right\Vert
_{\infty}^{q(p)-p}.
\]

\end{proof}

\begin{remark}
\label{alter}One can derive from Remark \ref{s=q} that if $0<q(p)<p^{\ast},$
then
\[
\liminf_{p\rightarrow1^{+}}\left\Vert w_{p,q(p)}\right\Vert _{q(p)}%
^{q(p)-p}=\liminf_{p\rightarrow1^{+}}\left\Vert w_{p,q(p)}\right\Vert
_{\infty}^{q(p)-p}%
\]
and%
\[
\limsup_{p\rightarrow1^{+}}\left\Vert w_{p,q(p)}\right\Vert _{q(p)}%
^{q(p)-p}=\limsup_{p\rightarrow1^{+}}\left\Vert w_{p,q(p)}\right\Vert
_{\infty}^{q(p)-p}.
\]
Thus, the alternative (\ref{alt}) in the statement of Theorem \ref{main2} can
be replaced with
\[
\limsup_{p\rightarrow1^{+}}\left\Vert w_{p,q(p)}\right\Vert _{q(p)}%
^{q(p)-p}=+\infty.
\]

\end{remark}

\section*{Acknowledgments}

The author thanks the support of Fapemig/Brazil (RED-00133-21), FAPDF/Brazil
(04/2021) and CNPq/Brazil (305578/2020-0).


\begin{thebibliography}{99}                                                                                               %


\bibitem {AH}W. Allegreto, Y.X. Huang: A Picone's identity for the
$p$-Laplacian and applications, Nonlinear Anal. \textbf{32} (1998) 819--830.

\bibitem {AFI}G. Anello, F. Faraci, A. Iannizzotto: On a problem of Huang
concerning best constants in Sobolev embeddings, Ann. Mat. Pura Appl.
\textbf{194} (2015) 67-779.

\bibitem {BF}L. Brasco, G. Franzina: Convexity properties of Dirichlet
integrals and Picone-type inequalities, Kodai Math. J. \textbf{37} (2014) 769-799.

\bibitem {BLind}L. \ Brasco, E. Lindgren: Uniqueness of extremals for some
sharp Poincar\'{e}-Sobolev constants, Trans. Amer. Math. Soc. \textbf{376}
(2023) 3541--3584.

\bibitem {BE11}H. Bueno, G. Ercole: Solutions of the Cheeger problem via
torsion functions, J. Math. Anal. Appl. \textbf{381} (2011) 263--279.

\bibitem {CC}G. Carlier, M. Comte: On a weighted total variation minimization
problem, J. Funct. Anal. \textbf{250} (2007) 214-226.

\bibitem {Er13}G. Ercole: Absolute continuity of the best Sobolev constant, J.
Math. Anal. Appl. \textbf{414} (2013) 420--428.

\bibitem {GST}J. Giacomoni, I. Schindler, P. Tak\'{a}\v{c}: Singular
quasilinear elliptic equations and H\"{o}lder regularity, C. R. Acad. Sci.
Paris, Ser. I \textbf{350} (2012) 383--388.

\bibitem {KF03}B. Kawohl, V. Fridman: Isoperimetric estimates for the first
eigenvalue of the p-Laplace operator and the Cheeger constant, Comment. Math.
Univ. Carolin. \textbf{44} (2003) 659-667.

\bibitem {Pa11}E. Parini: An introduction to the Cheeger problem, Surv. Math.
Appl. \textbf{6} (2011) 9--21.
\end{thebibliography}
\end{document}